\newtheorem{thm}{Theorem}[section]
\newtheorem{cor}[thm]{Corollary}
\newtheorem{lem}[thm]{Lemma}
\newtheorem{prop}[thm]{Proposition}
\newtheorem{defn}[thm]{Definition}
\newtheorem{rem}[thm]{Remark}
\numberwithin{equation}{section}
\newcommand{\HHFm}{\mathbf{HF^-}}
\newcommand\Field{\mathbb F}
\newcommand\Dual{\mathcal D}
\newcommand\Duality\Dual
\newcommand\x{\mathbf x}
\newcommand\ModSphere{\ModFlow\left({\mathbb S}\longrightarrow 
\Sym^{g-1}(\Sigma_{1})\times \Sym^2(\Sigma_{2})\right)}
\newcommand\ModSpheres\ModSphere
\newcommand\HFm{{\rm {HF}}^-}
\newcommand\CFinf{{{\rm {CF}}}^\infty}
\newcommand\UnparModSp{\widehat \ModSp}
\newcommand\UnparModFlow\UnparModSp
\newcommand\Mod\ModSp
\newcommand\ModMaps{\mathcal M}
\newcommand\ModSp\ModMaps
\newcommand{\bfn}{{\mathbb {N}}}
\newcommand{\bfq}{{\mathbb {Q}}}
\newcommand{\Vertices }{{\rm {Vert}}}
\newcommand{\uuinv}{U^{-1}, U}
\newcommand{\s}{\mathbf s} 
\renewcommand{\u}{\mathbf u} 
\renewcommand{\t}{\mathbf t}
\newcommand{\DD}{\mathfrak D}
 \newcommand{\Z}{\mathbb Z} \newcommand{\N}{\mathbb
  N} \newcommand{\Q}{\mathbb Q} \newcommand{\R}{\mathbb R}
\newcommand{\CFm}{{\rm {CF}} ^-}
\newcommand\alphak{\mbox{\boldmath$\alpha$}}
\newcommand\betak{\mbox{\boldmath$\beta$}}
\newcommand\CFinfComb{{\mathbb{CF}}^{\infty}}
\newcommand\HFinfComb{{\mathbb{HF}}^{\infty}}
\newcommand\CFmComb{{\mathbb{CF}}^{-}}
\newcommand\HFmComb{{\mathbb{HF}}^{-}}
\newcommand\HF{\rm {HF}}
\begin{document}

\title{Knot lattice homology in $L$-spaces}

\author{Peter Ozsv\'ath}
\address{Department of Mathematics, Princeton University\\
Princeton, NJ, 08544}
\email{petero@math.princeton.edu}

\author{Andr\'{a}s I. Stipsicz}
\address{R{\'e}nyi Institute of Mathematics\\
Budapest, Hungary and \\
Institute for Advanced Study, Princeton, NJ, 08540}
\email{stipsicz@math-inst.hu}

\author{Zolt\'an Szab\'o}
\address{Department of Mathematics, Princeton University\\
Princeton, NJ, 08544}
\email{szabo@math.princeton.edu}

\subjclass{57R58, 57M27} \keywords{Lattice homology, Heegaard Floer
  homology, knot Floer homology, $L$-space, integral surgery}

\begin{abstract}
  We show that the knot lattice homology of a knot in an $L$-space is
  equivalent to the knot Floer homology of the same knot (viewed these
  invariants as filtered chain complexes over the polynomial ring
  $\Z/2\Z [U]$).  Suppose that $G$ is a negative definite plumbing
  tree which contains a vertex $w$ such that $G-w$ is a union of
  rational graphs. Using the identification of knot homologies we show
  that for such graphs the lattice homology $\HFmComb (G)$ is
  isomorphic to the Heegaard Floer homology $\HFm (Y_G)$ of the
  corresponding rational homology sphere $Y_G$.
\end{abstract}

\maketitle

\section{Introduction}
\label{sec:intro}
Suppose that $G$ is a negative definite plumbing tree determining the
rational homology sphere $Y_G$. \emph{Heegaard Floer homology} (as it
is introduced in \cite{OSzF1, OSzF2}) associates to $Y_G$ a finitely
generated $\Field [U]$-module $\HFm (Y_G)$ (where $\Field$ denotes the
field $\Z/2\Z$ of two element), which splits according to spin$^c$
structures of $Y_G$ and also admits an absolute Maslov grading
\cite{OSzabs}.  This $\Field [U]$-module is defined as the homology of
a chain complex $\CFm (Y)$ associated to a Heegaard diagram of the
3-manifold $Y_G$.  The definition of the chain complex, in turn,
involves Floer theoretic constructions for symplectic manifolds and
Lagrangians in them associated to the Heegaard diagram of $Y_G$. In
particular, the boundary map of the chain complex counts certain pseudo-holomorphic disks in
high-dimensional symplectic manifolds.

In \cite{lattice} N\'emethi defined an invariant of negative definite
plumbing graphs which is computed as the homology of a chain complex
associated to the graph in a purely combinatorial manner.  As it was
shown in \cite{lattice}, the resulting \emph{lattice homology}
$\HFmComb (G)$ captures interesting information about singularities
with resolution graph $G$.  Heegaard Floer homology and lattice homology
share a number of common properties. The algebraic structures of the
two theories are similar, both satisfy certain exact triangles
(\cite{Josh, latticetriangle, OSzF2}), and for certain plumbings it is
easy to verify that the two homologies are, in fact, isomorphic
$\Field [U]$-modules \cite{lattice}.

As a further common property, both theories admit refinements for knots in
3-manifolds \cite{latticeknot, OSzknot}.  This refinement on the Heegaard
Floer side is defined for a pair $(Y,K)$ of a knot $K\subset Y$ in the
3-manifold $Y$, and admits the shape of a filtration $A$ on the chain complex
$\CFm (Y)$ computing the Heegaard Floer homology $\HFm (Y)$. (At least this is the
case for 3-manifolds with $b_1(Y)=0$.) In lattice homology, the knots are of a
special type.  They are specified by a tree $\Gamma _{v_0}$ with a
distingushed vertex $v_0$ such that $G=\Gamma _{v_0}-v_0$ is a negative
definite plumbing tree.  Once again, the invariant is a pair $(\CFmComb (G),
A)$, where $\CFmComb (G)$ is the chain complex associated to the background
plumbing graph $G$ computing the lattice homology, while $A$ is a filtration
on this $\Field [U]$-module.  (For more about these refinements see
\cite{latticeknot, OSzknot}, the discussion below and also the beginning
paragraphs of Sections~\ref{sec:hf} and \ref{sec:lattice}.)

It is  natural to expect that the two homology theories are
isomorphic for all negative definite plumbing trees of spheres.
Indeed, such isomorphisms have been already verified for a number of
families of graphs, cf. \cite{lattice, latticeknot, latticess}.  Such
an isomorphism provides a convenient description of the important
3-manifold invariants $\HFm (Y_G)$ --- at least for these special
3-manifolds. 

In the present paper we show that the filtered chain complexes in the
two theories for specific graphs $\Gamma _{v_0}$ are filtered chain
homotopic, and resting on this result we extend the family of plumbing
trees/forests for which the two theories produce isomorphic homology
groups.  The types of graphs for which these ideas apply will be
specified below.

In order to state our results, we need to consider a few definitions,
notions and constructions. Suppose that $\Gamma _{v_0}$ is a tree with
a distinguished vertex $v_0$, and all further vertices of $\Gamma
_{v_0}$ admit some integral framing. As before, let $G$ denote the
plumbing graph $\Gamma _{v_0}-v_0$.  As a plumbing graph, $G$ actually
gives rise to a surgery presentation of the plumbed 3-manifold $Y_G$:
replace each vertex by an unknot, link two unknots if and only if the
corresponding vertices are connected by an egde in $G$, and decorate
the unknots with the integers attached to the corresponding vertices.
After performing the surgeries on the unknots corresponding to the
vertices of $G$ (with the given framing), we get the 3-manifold $Y_G$
and the vertex $v_0$ defines a knot $K=K_{v_0}$ in $Y_G$. The
plumbing graph $G$ also determines a simply connected 4-manifold $X_G$ we
get by plumbing disk bundles over spheres together according to $G$,
and $K\subset \partial X_G=Y_G$.

Suppose that the plumbing tree $G$ is negative definite. Denote the
homology class in the plumbed 4-manifold corresponding to the vertex
$v_i$ of the graph by $E_i$. According to \cite{Artin} there is a
nonzero element $Z=\sum _i n_i\cdot E_i\in H_2 (X_G; \Z )$ with the
property that $n_i$ are all nonnegative integers, for every $i$ we have
$Z\cdot E_i\leq 0$, and for any other $Z'=\sum _i n_i 'E_i$ with the
same properties $n_i\leq n_i'$ holds for every $i$.  (The dot product
is computed using the intersection matrix $M_G$ on $H_2(X_G; \Z )$.)
\begin{defn}
The plumbing graph $G$ is \emph{rational} if for the class
$Z=\sum _i n_i E_i$ discussed above, the equality
\[
(\sum _i n_i E_i)^2=2\sum _i n_i+\sum _i n_iE_i^2 -2
\]
holds. (This condition is equivalent to requiring that the geometric genus
$p(Z)=\frac{1}{2}(Z^2+K\cdot Z)+1$ of the class $Z$ vanishes.)
\end{defn}

For a simple algorithm deciding whether a graph is rational, see
Remark~\ref{rem:teszt}.  (For further, more analytic characterizations
of rationality, see for example \cite{nemethi}.)  Following \cite{nemethi-ar} we
say that the negative definite plumbing tree $G$ is \emph{almost
  rational} if there is a vertex $w$ in $G$ with the property that by
decreasing the framing on $w$ sufficiently we get a rational graph.
The main result of the paper is the following theorem:

\begin{thm} \label{thm:main} Suppose that $\Gamma _{v_0}$ is a
  tree/forest with a distinguished vertex $v_0$, $G=\Gamma _{v_0}-v_0$
  is negative definite, and each of its components is rational. Then
  the filtered chain complex $(\CFmComb (G), A)$ of $\Gamma _{v_0}$ in
  lattice homology is filtered chain homotopy equivalent to the
  filtered complex $(\CFm (Y_G), A)$ of the pair $(Y_G, K_{v_0})$ in
  Heegaard Floer homology.
\end{thm}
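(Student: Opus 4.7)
The plan is to exploit the fact that when each component of $G$ is rational the plumbed three-manifold $Y_G$ is an $L$-space, so $K_{v_0}$ sits in an $L$-space; in this situation both filtered chain complexes in question are rigid enough to be determined by discrete numerical data, and those numerical invariants can be matched via a large surgery argument.

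I would begin by recording that $Y_G$ is an $L$-space: combining the already available unfiltered rational identification $\HFmComb(G)\cong\HFm(Y_G)$ with N\'emethi's computation that $\HFmComb(G)\cong\bigoplus_{\spinc}\F[U]$ for rational $G$ yields $\HFm(Y_G)\cong \bigoplus_{\spinc}\F[U]$. Thus both $\CFm(Y_G)$ and $\CFmComb(G)$ are $\Z$-filtered $\F[U]$-chain complexes whose total homology is $\bigoplus_{\spinc}\F[U]$. I would then establish a structural lemma: any $\Z$-filtered $\F[U]$-chain complex whose total homology is $\F[U]$ in each $\spinc$-summand is determined up to filtered chain homotopy by the sequence of integer \emph{local $h$-invariants} $V_s^{\spinc}$ encoding, in each summand, the exponent of $U$ generating the image of $H_*(\Filt_s)\to H_*(\CFm)\cong\F[U]$. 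The absence of a reduced part in the $L$-space setting is what rules out hidden extension data and reduces the theorem to matching these numerical invariants.

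To identify the $V_s^{\spinc}$ on the two sides I would carry out a large surgery argument. Extend $\Gamma_{v_0}$ at $v_0$ by a chain of vertices with sufficiently negative framings realizing $n$-surgery on $K_{v_0}\subset Y_G$, producing a plumbing graph $\Gamma^n$ with $Y_{\Gamma^n}=Y_n(K_{v_0})$. A combinatorial argument via Laufer's computation sequence shows that for $|n|$ large enough each component of $\Gamma^n$ is again rational, so the unfiltered rational case yields an isomorphism $\HFmComb(\Gamma^n)\cong\HFm(Y_{\Gamma^n})$ of absolutely graded $\F[U]$-modules, and in particular the correction terms of $Y_{\Gamma^n}$ computed on the two sides agree. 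The large surgery formula---in its Heegaard Floer version and in N\'emethi's lattice counterpart---then recovers the local $h$-invariants $V_s^{\spinc}$ of the knot filtration from the correction terms of $Y_n(K_{v_0})$, and the numerical invariants on the two sides therefore coincide; the structural lemma promotes this numerical equality to a filtered chain homotopy equivalence.

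The main obstacle is the structural lemma itself: one must show that the filtered chain homotopy type of a $\Z$-filtered $\F[U]$-complex with $L$-space homology is genuinely captured by the $V_s^{\spinc}$, which is subtle because $K_{v_0}$ is typically not null-homologous in $Y_G$ and the Alexander filtration is only relatively defined, so the normal form and its uniqueness must be set up with care in each relative $\spinc$-class. A secondary technical difficulty is the combinatorial verification that $\Gamma^n$ can be arranged rational for all sufficiently negative $n$, which requires controlling how Laufer's computation sequence is affected by the long negative chain attached at $v_0$.
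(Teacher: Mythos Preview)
Your overall architecture—classify the relevant filtered complexes by discrete numerical invariants, then match those invariants via a large-surgery comparison where the unfiltered identification is already known—is exactly the paper's strategy. But two points deserve correction.

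First, your structural lemma is false as stated. The hypothesis that the \emph{total} homology of the filtered complex is $\Field[U]$ does not pin down the filtered chain homotopy type via the sequence $V_s$: any non-$L$-space knot in $S^3$ (the figure-eight, say) furnishes a counterexample, since then some sublevel $A_s=\Filt_s$ has $U$-torsion in its homology, invisible to the image in $H_*(\CFm)$. The correct hypothesis—what the paper calls \emph{$L$-space type}—is that every $H_*(A_s)\cong\Field[U]$, and this is \emph{not} automatic from $Y_G$ being an $L$-space. On the Heegaard Floer side it requires that sufficiently large positive surgery on $K_{v_0}$ is again an $L$-space; on the lattice side it requires that $G_{-k}(v_0)$ has lattice homology concentrated in $\delta$-degree $0$. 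The paper checks both (Lemmas~\ref{lem:arat} and~\ref{lem:possurg} feeding into Theorems~\ref{thm:aibidi} and~\ref{thm:aibidiL}), but only when $v_0$ is a leaf, via almost-rationality of $G_{-k}(v_0)$. Until you have $L$-space type, your $V_s$ are well-defined but insufficient, and the large-surgery correction terms do not cleanly recover them either.

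Second, and relatedly, the paper does not run your direct argument when $v_0$ has valence $\geq 2$. It proves the leaf case first and then appeals to the connected sum formulae in both theories (Subsections~\ref{ss:consummhf} and~\ref{ss:consummlattice}) to assemble the general case. Your route instead needs $\Gamma^n$ to be rational (or at least almost rational) when $v_0$ is not a leaf; you label this a ``secondary'' difficulty, but it is precisely the step that would replace the paper's connected-sum reduction, and it is not a triviality. The paper's decomposition into leaf pieces sidesteps it entirely and is the cleaner route.
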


As a simple corollary we get the following: 
\begin{cor}
The knot lattice homology of a knot 
in $S^3$ is equal to the knot Floer homology of the same knot. \qed
\end{cor}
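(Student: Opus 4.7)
The plan is to deduce the corollary directly from Theorem~\ref{thm:main} by specializing to $Y_G = S^3$. The first step is to realize any knot $K \subset S^3$ by a plumbing presentation $\Gamma_{v_0}$ satisfying the hypotheses of the theorem: $G = \Gamma_{v_0} - v_0$ should be negative definite with rational components and $Y_G = S^3$. The most trivial case is $G = \emptyset$, which is vacuously rational, satisfies $Y_G = S^3$, and represents the unknot. For a general knot, the standard approach is a Kirby calculus argument exhibiting $K$ inside a plumbing diagram whose remaining vertices form rational subgraphs surgering to $S^3$; linear chains of $-1$-framed unknots, for instance, are easily checked to be rational and blow down to the empty diagram.

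Once such a presentation is in hand, Theorem~\ref{thm:main} supplies a filtered chain homotopy equivalence between $(\CFmComb(G), A)$ and $(\CFm(Y_G), A) = (\CFm(S^3), A)$. Passing to filtered homology converts this into an identification of the knot lattice homology of $\Gamma_{v_0}$ with the knot Floer homology of the pair $(S^3, K)$, which is precisely the content of the corollary. Since knot Floer homology is intrinsically defined (independent of any surgery presentation), the corollary also implicitly asserts that the knot lattice homology so computed does not depend on the choice of plumbing presentation.

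The main obstacle, to the extent there is one, lies in the realization step: one must produce, for each knot in $S^3$, a plumbing presentation in which the background graph $G$ is a disjoint union of rational graphs that surgery to $S^3$. The flexibility afforded by rationality (the empty graph and chains of $-1$'s already give $Y_G = S^3$) together with standard Kirby moves makes this step essentially routine, and so the corollary follows as an immediate consequence of the main theorem—hence the terminal \emph{q.e.d.} with no written argument.
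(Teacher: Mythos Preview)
Your proposal rests on a misreading of the corollary. Knot lattice homology is only defined for knots that arise from the plumbing construction: one is \emph{given} a tree $\Gamma_{v_0}$ with distinguished vertex $v_0$ such that $G=\Gamma_{v_0}-v_0$ is negative definite, and the knot in question is the specific component $K_{v_0}\subset Y_G$. The corollary is therefore a statement about such knots when $Y_G=S^3$, not about arbitrary knots in $S^3$. Indeed, as the remark immediately following the corollary explains, the knots produced by this construction are exactly connected sums of iterated torus knots; a generic knot in $S^3$ simply \emph{cannot} be realized as $K_{v_0}$ for any plumbing tree. Your ``realization step'' (placing an arbitrary $K$ into a plumbing diagram via Kirby moves and chains of $(-1)$-framed unknots) is therefore not routine---it is impossible, since every component of a plumbing link is an unknot and the isotopy type of $K_{v_0}$ in $Y_G$ is completely determined by the combinatorics of the tree.

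The actual argument is shorter and runs in the opposite direction. One starts with $\Gamma_{v_0}$ satisfying $Y_G=S^3$ and must check the hypothesis of Theorem~\ref{thm:main}, namely that each component of $G$ is rational. Since $Y_G$ is the connected sum of the boundaries of the components and equals $S^3$, each component $G_j$ has $Y_{G_j}=S^3$; a negative definite plumbing tree with boundary $S^3$ can be sequentially blown down (this is precisely what the remark following the corollary asserts), and such a graph is rational. Theorem~\ref{thm:main} then applies directly---hence the bare \emph{q.e.d.}
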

\begin{rem}
  The knots produced by the above construction are quite special. It can
  be shown that if $v_0$ is a leaf and $G=\Gamma _{v_0}-v_0$
  represents $S^3$, then $G$ can be sequentially blown down, and
  therefore the knot represented by the vertex $v_0$ is an iterated
  torus knot.  If $v_0$ is not a leaf, then $\Gamma _{v_0}$ can be
  presented as the connected sum of other graphs with the
  distinguished vertex being a leaf, and hence the knot corresponding
  to $v_0$ in the general case is the connected sum of certain
  iterated torus knots.
\end{rem}

As a further corollary we verify the following result:

\begin{thm}\label{thm:egycsucs}
Suppose that $G$ is a negative definite plumbing tree with the property that
it admits a vertex $w$ such that all the components of $G-w$ are rational 
graphs. 
Then the lattice homology $\HFmComb (G)$ of $G$ is isomorphic (as a 
Maslov-graded $\Field [U]$-module) to the Heegaard Floer homology 
$\HFm (Y_G)$ of the 3-manifold $Y_G$ defined by $G$.
\end{thm}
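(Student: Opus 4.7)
\textbf{Proof plan for Theorem~\ref{thm:egycsucs}.}
The strategy is to view the theorem as a surgery statement and feed Theorem~\ref{thm:main} into it. Set $\Gamma_w := G$ and take $w$ as the distinguished vertex; then $\Gamma_w - w = G - w$ is, by hypothesis, a disjoint union of rational plumbing graphs. Thus the hypotheses of Theorem~\ref{thm:main} are met, and we obtain a filtered chain homotopy equivalence
\[
\Phi : (\CFmComb(G-w), A) \xrightarrow{\ \simeq\ } (\CFm(Y_{G-w}), A)
\]
of $\Field[U]$-complexes, where on the right the filtration $A$ encodes the pair $(Y_{G-w}, K_w)$ and on the left it encodes the knot lattice filtration determined by the vertex $w$ in the extended graph $G$.

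The 3-manifold $Y_G$ is obtained from $Y_{G-w}$ by Dehn surgery on $K_w$ with framing equal to the weight $w^2$ attached to the vertex $w$ in $G$. Applying the Ozsv\'ath--Szab\'o integer surgery formula from~\cite{OSzknot}, $\HFm(Y_G)$ is the homology of a mapping cone
\[
\mathbb{X}(Y_{G-w},K_w) \xrightarrow{\ \mathbb{D}\ } \mathbb{Y}(Y_{G-w},K_w)
\]
of $\Field[U]$-modules whose two factors and connecting map are built in a purely algebraic, functorial fashion out of the filtered complex $(\CFm(Y_{G-w}), A)$ together with the framing parameter $w^2$.

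On the lattice side there is a parallel surgery description. By the very construction of the knot lattice refinement in~\cite{latticeknot} (see also~\cite{latticetriangle}), the lattice chain complex $\CFmComb(G)$ of the enlarged graph $G$ can be presented as an analogous mapping cone built from the filtered complex $(\CFmComb(G-w), A)$ and the framing $w^2$, and in a form that mirrors the Heegaard Floer integer surgery cone. Once this identification is in place, functoriality of the mapping cone construction allows us to transport the filtered equivalence $\Phi$ to a quasi-isomorphism between the two cones; passing to homology and matching Maslov gradings then yields the desired $\Field[U]$-module isomorphism $\HFmComb(G) \cong \HFm(Y_G)$.

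The main obstacle is verifying that the lattice-side presentation of $\CFmComb(G)$ as a surgery-type mapping cone agrees, under $\Phi$, with the Heegaard Floer surgery cone \emph{on the nose} rather than merely up to general quasi-isomorphism: one has to match the combinatorial structure maps intrinsic to the lattice complex of $G$ with the flip/large-surgery maps appearing in~\cite{OSzknot}, and one has to track the absolute Maslov shifts (encoded on the lattice side by characteristic vectors of $G-w$) so that the resulting isomorphism is grading-preserving. Everything else reduces to a direct application of Theorem~\ref{thm:main} combined with the functoriality of the mapping cone.
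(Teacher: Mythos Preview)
Your overall strategy is the same as the paper's: apply Theorem~\ref{thm:main} to the pair $(\Gamma_w,w)$, then compare the two surgery mapping cones (Theorems~\ref{thm:oszint} and~\ref{thm:latticemappingcone}) computing $\HFm(Y_G)$ and $\HFmComb(G)$. The pieces $A_i$, $B$, $C_i$ and the inclusion maps $v_i$, $h_i$ are genuinely functorial in the filtered complex, so the equivalence $\Phi$ matches those without difficulty.

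The gap is that the ``main obstacle'' you flag at the end is not bookkeeping; it is the entire content of the argument, and you give no mechanism for resolving it. The mapping cone is \emph{not} built purely from the filtered complex and the framing: it also uses the identifications $N_n\colon C_i(\s)\to B(\s_{[K]})$ on the Heegaard Floer side (coming from holomorphic triangle counts) and $N_n^L\colon C_i^L(\s)\to B^L(\s_{v_0})$ on the lattice side (coming from a combinatorial restriction). There is no abstract functoriality that forces these to correspond under $\Phi$, so your sentence ``functoriality of the mapping cone construction allows us to transport $\Phi$'' does not go through as stated.

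The paper's resolution is short but uses the rationality hypothesis in an essential second way. Since each component of $G-w$ is rational, the homologies of $B(\s)$, $C_i(\s)$, $B^L(\s)$, $C_i^L(\s)$ are all isomorphic to $\Field[U]$. Each $N_n$ (resp.\ $N_n^L$) is a chain homotopy equivalence between two such complexes, hence induces the identity on homology; combined with the fact that the $v$- and $h$-maps already agree under $\Phi$ (Proposition~\ref{prop:dkegyenlok}), the two mapping cones are forced to have isomorphic homology. In other words, in the $L$-space situation the cone is determined up to quasi-isomorphism by the exponents $a_i(\s)$, $b_i(\s)$ alone, and those were already matched. Without this observation your plan is incomplete, and it is precisely here---not in the Maslov-grading bookkeeping---that the hypothesis on $G-w$ does its work.
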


Notice that every almost rational graph (in the sense of
\cite{lattice}), and so in particular evey graph with one bad vertex
is considered by the above theorem.  For such graphs the stated
isomorphism was already proved by N\'emethi \cite{lattice} --- indeed,
the result for almost rational graphs will be used in the proof of our
theorem.  Theorem~\ref{thm:egycsucs}, however, applies to many more
graphs, among which we can find type-$k$ graphs for arbitrary
$k$. (Recall~\cite{latticess} that a plumbing graph is said to be {\em
  of type-$k$} if we can find $k$ vertices $\{v_{i_1},\dots,v_{i_k}\}$
on which we can change the framings $\{ m_{i_1}, \ldots , m_{i_k}\}$
in such a way that the resulting graph is rational.)  For similar
results see also \cite{lattice, s3csomok, latticeknot}.

{\bf Acknowledgements}: PSO was supported by NSF grant number
DMS-0804121.  AS was supported by OTKA NK81203, by ERC Grant LDTBud,
by \emph{Lend\"ulet program} and by the Institute for Advanced Study.
ZSz was supported by NSF grants DMS-0603940, DMS-0704053 and
DMS-1006006.

\section{Heegaard Floer homology}
\label{sec:hf}

We start our discussion by briefly recalling the relevant notions and
constructions of Heegaard Floer homology. For a thorough discussion
the reader is advised to consult \cite{OSzF1, OSzF2, OSzknot,
  OSzlinks}.

Suppose that $K\subset Y$ is a given knot, and for simplicity assume that $Y$
is a rational homology sphere, i.e. $b_1(Y)=0$. Consider a doubly pointed
Heegaard diagram $\DD _K = (\Sigma , \alphak , \betak , w, z )$ compatible
with $(Y,K)$.  Let ${\mathbb {T}}_{\alpha}$ (and ${\mathbb {T}}_{\beta}$,
respectively) denote product tori $\times _{i=1}^g \alpha _i$ (and $\times
_{i=1} ^g \beta _i$, resp.) in the $g$-fold symmetric product ${\rm
  {Sym}}^g(\Sigma )$ of the genus-$g$ surface $\Sigma $.  The diagram $\DD =
(\Sigma , \alphak , \betak , w)$ (and a spin$^c$ structure $\s$) gives rise to
a chain complex $\CFm (\DD , \s )$ over $\Field [U]$, which is naturally a
subcomplex of $\CFinf (\DD , \s )=\CFm (\DD , \s )\otimes _{\Field [U]}\Field
[\uuinv]$. For both cases the generators of the modules are given by the
intersection points ${\mathbb {T}}_{\alpha }\cap {\mathbb {T}}_{\beta }$ in
${\rm {Sym}}^g (\Sigma )$, and the boundary map counts certain
pseudo-holomorphic disks in this symmetric product (after fixing appropriate
almost complex structure on it).  Since $Y$ is a rational homology sphere (and
so, in particular, all spin$^c$ structures are torsion), the chain complexes
come with absolute Maslov gradings \cite{OSzabs}, taking values in $\bfq$. The
boundary map of the complex decreases this grading by one (and therefore the
Maslov grading descends to the homologies), while multiplication by $U$
decreases the Maslov grading by two.  Indeed, since $Y$ is a rational homology
sphere, for a fixed spin$^c$ structure $\s$ the $\Field [U]$-module $\HFm (Y,
\s )=H_*(\CFm (\DD , \s ), \partial ^-)$ has the algebraic structure of a sum
of $\Field [U]$ with a finitely generated $U$-torsion module.  The Maslov
grading of the generator of the free part is called the \emph{$d$-invariant}
$d(Y, \s )$ of the underlying spin$^c$ 3-manifold $(Y, \s )$. Our
present grading conventions on
$\HFm$ are different from those from~\cite{OSzabs}: according to
our present conventions, the generator of $\HFm(S^3)\cong\Field[U]$ has Maslov
grading equal to $0$, rather than $-2$. However, the $d$ invariants are the same.

We use the further basepoint $z$ of $\DD _K$ to equip the module $\CFm
(\DD , \s )$ by the Alexander grading $A$, inducing the
\emph{Alexander filtration} on $\CFm (\DD , \s )$.  At first glance
the theory only provides a relative Alexander grading on $\CFm (\DD ,
\s )$ for a fixed spin$^c$ structure $\s$. For a rational homology
sphere, however, these gradings can be coordinated for the various
spin$^c$ structures, and by requiring a global symmetry we get an
absolute lift, which induces the filtration we considered above.  The
Alexander grading in general is not assumed to be integer
valued. Nevertheless, for every spin$^c$ structure $\s$ there is a
rational number $i_{\s}\in [0,1)$ with the property that for any
  generator $\x$ with spin$^c$ structure $\s$ the Alexander grading
  $A(\x )$ is congruent to $i _{\s }$ mod 1. Multiplication by $U$
  decreases the Alexander grading by 1.

Notice that the filtered chain complex $(\CFm (\DD , \s), A)$
naturally gives rise to the doubly filtered chain complex $(\CFinf
(\DD , \s ), j , A)$ where $(\CFinf (\DD , \s ), j , A)$ is naturally
an $\Field [\uuinv ]$-module and the filtration $j$ measures the negative 
of the
exponent of $U$. In particular
\[
\{ x\in \CFinf (\DD , \s )\mid j(x )\leq 0 \}
\]
is naturally isomorphic to $\CFm (\DD , \s )$.

The double filtration allows us to consider the following 
subcomplexes:
\[
B(\s )=\{ x\in \CFinf (\DD , \s )\mid j(x)\leq 0\},
\]
\[
C_i(\s ) =\{ x\in \CFinf (\DD , \s )\mid A(x)\leq i\} ,
\]
\[
A_i(\s )=C_i (\s )\cap B(\s )=\{ x\in \CFinf (\DD , \s )\mid
j(x)\leq 0, \ A(x)\leq i\}.
\]
There are natural embeddings $\psi =\psi _i(\s) \colon A_i(\s) \to
A_{i+1}(\s), v=v_i(\s )\colon A_i(\s )\to B(\s )$ and $h=h_i(\s
)\colon A_i (\s )\to C_i (\s )$. The map $\phi = \phi _{i+1}(\s
)\colon A_{i+1}(\s )\to A_i (\s )$ is defined by composing the
multiplication-by-$U$ map with the natural embedding. 
(The rational number $i$ above is congruent to $i_{\s}\in [0,1)$.)

The doubly filtered chain complex, and in particular the subcomplexes
defined (and the maps considered) above can be conveniently used to
describe a chain complex computing Heegaard Floer homology for any
surgery along $K$ \cite{OSzint, OSzrac}. (In the following we will
concentrate exclusively on integral surgeries.)

Before stating the theorem connecting the filtered chain complexes and
Heegaard Floer homolgies of surgeries, we introduce the following
notation. Suppose that $\s$ is a given spin$^c$ structure on $Y$.  Let
$\s _{[K]}$ denote the spin$^c$ structure we get by twisting $\s$ with
the Poincare dual of $[K]$. In particular, $c_1(\s _{[K]})= c_1 (\s
)+2PD[K]$. (If $K$ is null-homologous, then $\s=\s _{[K]}$.)  More
generally, for any integer $n\in \Z$ the spin$^c$ structure
$\s_{n[K]}$ is given by applying the previous construction $\vert n
\vert$-times, using $[K]$ for positive $n$ and $-[K]$ for negative
$n$.

For the following theorem, it is useful to use a completed version of
Heegaard Floer homology, $\HHFm(Y)$. This can be defined as the
homology of $\CFm(Y)\otimes _{\Field [U]}\Field[[U]]$, where
$\Field[[U]]$ denotes the ring of formal power series in $U$. There
are corresponding completions ${\mathbf{B}}(\s)$,
${\mathbf{A}}_i(\s)$, and ${\mathbf{C}}_i(\s)$ of $B(\s)$, $A_i(\s)$,
and $C_i(\s)$ respectively.  The maps $\phi, \psi, v$ and $h$
naturally extend to these completions.

\begin{thm}(\cite{OSzint}) \label{thm:oszint} 
Let $Y_p(K)$ denote the result of some integer surgery along $K$ and
fix a spin$^c$ structure $\t$ on $Y_p(K)$. Suppose furthermore that
$b_1(Y_p(K))=0$. Then there are rational numbers $i$ and $\alpha$, a
spin$^c$ structure $\s \in Spin^c (Y)$ (depending on $p$ and $\t$) and
chain maps $N_n\colon {\mathbf{C}}_{i+\alpha \cdot n}(\s _{n[K]})\to {\mathbf{B}}(\s
_{(n+1)[K]})$ such that the chain complexes
\[
{\mathbb {A}}_{\t }=\prod_{n=-\infty }^{\infty} {\mathbf{A}}_{i+\alpha \cdot n}(\s _{n[K]})
\]
\[
{\mathbb {B}}_{\t }=\prod _{n=-\infty }^{\infty} {\mathbf{B}} (\s _{n[K]})
\]
and the map $f\colon {\mathbb {A}}_{\t } \to {\mathbb {B}}_{\t }$
defined on the component ${\mathbf{A}}_{i+\alpha \cdot n}(\s _{n[K]})$ as
$v_{i+\alpha n}+N_n\circ h_{i+\alpha n}$ provide a mapping cone
$MC({\mathbb {A}}_{\t }, {\mathbb {B}}_{\t }, f)$ with the property
that its homology is isomorphic to $\HHFm (Y_p(K), \t )$. \qed
\end{thm}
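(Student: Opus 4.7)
The plan is to combine a large-surgery base case with holomorphic triangle maps in an auxiliary diagram, and assemble the resulting pieces into a mapping cone. I would proceed in four stages.

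First, I would fix a doubly-pointed Heegaard diagram $(\Sigma,\alphak,\betak,w,z)$ adapted to $(Y,K)$ in which $\beta_g$ is a meridian of $K$ and the two basepoints lie on opposite sides of $\beta_g$. For any integer framing $q$, replacing $\beta_g$ by a curve $\gamma_g$ of framing $q$ (winding $\gamma_g$ many times against $z$, and perturbing the remaining $\beta_i$'s to $\gamma_i$'s) gives a Heegaard diagram $(\Sigma,\alphak,\gammak,w)$ for $Y_q(K)$. The winding puts the diagram in a standard form in which the affine identification $\Spinc(Y_q(K))\cong \Spinc(Y)/[K]$ is transparent, and one may index a system of representatives by $n\in\Z$, writing them as $\s_{n[K]}$ for a fixed $\s\in\Spinc(Y)$.

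Second, I would prove the large-surgery formula as the base case: for $|q|$ sufficiently large, and $\s_q\in\Spinc(Y)$ a specific representative for the restriction of a given $\t\in\Spinc(Y_q(K))$, there is a filtered chain homotopy equivalence $\CFm(Y_q(K),\t)\simeq \mathbf{A}_j(\s_q)$ for an appropriate rational number $j$. The proof is the standard double-basepoint area estimate: for $|q|$ large, any holomorphic disk in $\mathrm{Sym}^g(\Sigma)$ for $(\alphak,\gammak)$ that crosses $w$ or $z$ has area bounded below by a positive constant times $|q|$, so only disks missing both basepoints contribute, and these are identified via the winding with disks for $(\alphak,\betak)$ that respect the Alexander filtration, producing exactly the $\mathbf{A}$-complex.

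Third, to handle general integer $p$, I would add an auxiliary large-framing surgery and consider a Heegaard triple $(\Sigma,\alphak,\betak,\gammak)$ in which $\betak$ realizes $p$-framed surgery and $\gammak$ realizes a surgery of very large framing. Holomorphic triangle counts using the two canonical top-dimensional intersection points in the handleslide subdiagrams yield chain maps $v_j\colon \mathbf{A}_j(\s_{n[K]})\to \mathbf{B}(\s_{n[K]})$ and $h_j\colon \mathbf{A}_j(\s_{n[K]})\to \mathbf{C}_j(\s_{n[K]})$. The maps $N_n\colon \mathbf{C}_{i+\alpha n}(\s_{n[K]})\to \mathbf{B}(\s_{(n+1)[K]})$ arise from triangle (and higher polygon) counts in which the $\Spinc$-structure is shifted by $[K]$; concretely they encode the part of the cobordism map for the two-handle cobordism that twists by the generator of $H^2$. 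Since the $\t\in\Spinc(Y_p(K))$ with a fixed restriction to the cobordism are parametrized by a $\Z$-indexed family of representatives $\s_{n[K]}$, the products $\mathbb{A}_\t=\prod_n \mathbf{A}_{i+\alpha n}(\s_{n[K]})$ and $\mathbb{B}_\t=\prod_n \mathbf{B}(\s_{n[K]})$ of the statement appear naturally as the total complexes, with the map $f=v+N\circ h$ on each factor.

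Finally, the identification of the mapping cone with $\HHFm(Y_p(K),\t)$ is proved by induction on the auxiliary framing, using the large-surgery formula as the base case and the surgery exact triangle as the inductive step; the completion to $\Field[[U]]$ is needed because infinitely many summands contribute nontrivially, so one must pass from truncated to formal power series models. The main obstacle is to show that the triangle maps really assemble into a single mapping cone quasi-isomorphic to $\CFm(Y_p(K),\t)$, rather than merely into a spectral sequence converging to it. This requires a holomorphic quadrilateral analysis in a suitable quadruple diagram, verifying that the relevant composition of triangle maps on $\mathbf{A}_{i+\alpha n}$ is null-homotopic with null-homotopy precisely the correction $N_n$, and keeping careful track of $\Spinc$-structures so that the induced map on the completed models $\mathbf{A}_j$, $\mathbf{B}$, $\mathbf{C}_j$ is the claimed $f=v+N\circ h$.
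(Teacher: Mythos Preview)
The paper does not prove this theorem: it is stated with a terminal \qed\ and attributed to \cite{OSzint}, so there is no ``paper's own proof'' to compare against. Your sketch is an outline of how the cited source establishes the result, and at that level of resolution the architecture is right: a large-surgery identification of $\CFm(Y_q(K),\t)$ with a single $\mathbf{A}_j$, followed by an exact-triangle / iterated-cone argument to reach arbitrary integer framings, with the passage to $\Field[[U]]$ needed because infinitely many terms contribute.

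Two points in your sketch are not quite accurate and would need repair in a full write-up. First, in the present paper the maps $v_i$ and $h_i$ are \emph{not} triangle counts: they are the natural inclusions $A_i(\s)\hookrightarrow B(\s)$ and $A_i(\s)\hookrightarrow C_i(\s)$ of subcomplexes of $\CFinf$. The holomorphic-triangle content lives in the identification of the mapping cone with $\CFm(Y_p(K))$ and in the maps $N_n$, not in $v$ and $h$ themselves. Second, your justification of the large-surgery formula (``disks crossing $w$ or $z$ have area $\gtrsim |q|$'') is not the mechanism used in \cite{OSzknot,OSzint}: the argument there goes via a Heegaard triple and a small-triangle count showing that the cobordism map gives a chain isomorphism onto $A_j$ in the winding region, not via an energy bound on disks in the surgered diagram. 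These are fixable details, but as written they misdescribe where the analytic work actually happens.
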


In the above theorem one needs to use the completed version of the
theory once $p>0$ is allowed. (In fact, we will use this result for
sufficiently large positive integer $p$.) On the other hand, as our
next lemma shows, for torsion spin$^c$ structures the two theories
(before and after the above completion) determine each other. Indeed,
let $C$ be a free chain complex of free chain complex
over $\Field[U ]$ with finitely generated homology. A {\em Maslov grading} on $C$ is a grading
with values in $\Q$ with the property that the differential drops the
grading by $1$ and multiplication by $U$ drops it by $2$.  The
following is a standard algebraic fact:

\begin{lem}
  \label{lem:CompletionLemma}
  Let $C$ be a free, Maslov-graded chain complex over $\Field[U]$, with
  finitely generated homology.  Let $C[[U]]$ denote its completion, thought of
  as a chain complex over $\Field[[U]]$.  Then, the homology of $C$ and the
  homology of $C[[U]]$ contain the same information; in particular, if $C$ and
  $C'$ are two finitely-generated, Maslov-graded chain complexes and
  $H_*(C[[U]])\cong H_*(C'[[U]])$, then $H_*(C)\cong H_*(C')$.
\end{lem}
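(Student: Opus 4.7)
The plan is to combine the structure theorem for finitely generated modules over the PID $\Field[U]$ with flatness of the $(U)$-adic completion.

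First, I would decompose $H_*(C)$ as a Maslov-graded $\Field[U]$-module. Since $\Field[U]$ is a PID and $H_*(C)$ is finitely generated, we obtain
$$H_*(C)\cong \Field[U]^{a}\oplus \bigoplus_{i=1}^{k} \Field[U]/(U^{n_i}),$$
for some non-negative integer $a$, positive integers $n_1,\dots,n_k$, and appropriate Maslov-grading shifts. I want the same data to be extractable from $H_*(C[[U]])$.

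Next, I would invoke flatness of $\Field[[U]]$ over the Noetherian ring $\Field[U]$. Since $C$ is a complex of free (hence flat) $\Field[U]$-modules, tensoring with $\Field[[U]]$ commutes with homology, giving
$$H_*(C[[U]])\cong H_*(C)\otimes_{\Field[U]}\Field[[U]].$$
Computing this tensor product summand by summand: the free part becomes $\Field[[U]]^{a}$, while each torsion summand is annihilated by some power of $U$ and is therefore already complete, so $\Field[U]/(U^{n_i})\otimes_{\Field[U]}\Field[[U]]\cong \Field[U]/(U^{n_i})$. Hence
$$H_*(C[[U]])\cong \Field[[U]]^{a}\oplus \bigoplus_{i=1}^{k}\Field[U]/(U^{n_i})$$
as Maslov-graded $\Field[[U]]$-modules.

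Finally, I would recover $(a; n_1,\dots,n_k)$ intrinsically from $H_*(C[[U]])$: the $U^\infty$-torsion submodule equals $\bigoplus_i \Field[U]/(U^{n_i})$ (whose isomorphism type determines the multiset of $n_i$ together with their Maslov shifts), and the quotient by this torsion is free of rank $a$ over $\Field[[U]]$. Applying the same analysis to $C'$, an isomorphism $H_*(C[[U]])\cong H_*(C'[[U]])$ forces equality of these invariants, hence $H_*(C)\cong H_*(C')$. The only subtlety is bookkeeping Maslov gradings, which is automatic since completion is performed in each grading separately and the structure theorem is compatible with the grading; no real obstacle arises.
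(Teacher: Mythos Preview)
Your proposal is correct and follows essentially the same route as the paper: decompose $H_*(C)$ via the structure theorem for finitely generated modules over the PID $\Field[U]$, then observe that tensoring with $\Field[[U]]$ converts each free summand to $\Field[[U]]$ and leaves each torsion summand $\Field[U]/(U^{n_i})$ unchanged. The paper's proof is terser, simply asserting the two displayed decompositions; your explicit invocation of flatness of $\Field[[U]]$ over $\Field[U]$ to justify $H_*(C\otimes\Field[[U]])\cong H_*(C)\otimes\Field[[U]]$ supplies a step the paper leaves implicit, but the argument is otherwise identical.
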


\begin{proof}
  By standard homological algebra, $H_*(C)$ is a direct sum of
  finitely many cyclic modules, each of which inherits a (rational)
  Maslov grading. It follows that $H_*(C)$ has the following form:
  $$H_*(C)\cong \Field[U]^{n_0}\oplus \bigoplus_{i=1}^k
  (\Field[U]/U^{n_i}).$$
  But in this case,
  \[
  H_*(C[[U]])\cong \Field[[U]]^{n_0}\oplus \bigoplus_{i=1}^k
  (\Field[U]/U^{n_i}),
  \] 
  concluding the argument.
\end{proof}

Recall that the rational homology sphere $Y$ is called an 
\emph{$L$-space} if $\HFm (Y, \s )$ is isomorphic to 
$\Field [U]$ for every spin$^c$ structure $\s \in Spin^c(Y)$.
With a slight extension of the results of \cite{OSzlens}
(where surgeries along certain knots in $S^3$ were examined) we get
the following:

\begin{thm}\label{thm:aibidi}
Suppose that $Y$ is an $L$-space, and $K\subset Y$ is a knot 
such that there is a sufficiently large positive integer $p$ with the
property that $Y_p(K)$ ($p$-surgery on $K$) is also an $L$-space. Then,
\begin{enumerate}
\item $H_*(B(\s ))=H_* (C_i(\s ))=H_*(A_i(\s ))=\Field [U] $ for every
  spin$^c$ structure $\s $ on $Y$.
\item The map $\psi _i(\s)_*$ induced by the embedding of $A_i (\s )$
  into $A_{i+1}(\s )$ is multiplication by $U^{d_i(\s )}$ with $d_i(\s
  )\in \{ 0, 1\}$.
\item The maps $v_i (\s )_*$ and $h_i(\s)_*$ (induced by the maps
  $v_i(\s )$ and $h_i (\s )$ on homology) are multiplications by
  $U^{a_i(\s)}$ and $U^{b_i(\s)}$ for some nonnegative integers
  $a_i(\s ), b_i(\s)$, respectively.
\item If $d_i(\s )=0$ then $a_i(\s)=a_{i+1}(\s)$ and $b_i(\s
  )=b_{i+1}(\s)-1$, while if $d_i(\s )=1$ then $a_i(\s)=a_{i+1}(\s)+1$
  and $b_i(\s )=b_{i+1}(\s )$.
\item Finally, for all $i,j$ large enough $a_{i}(\s )=0$ and
  $b_{-j}(\s )=0$.
\end{enumerate}
\end{thm}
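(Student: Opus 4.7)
The plan is to establish~(1) first (this is the main new input), then derive (2)--(4) from it by $\Field[U]$-module arguments and two diagram chases, and finish with (5) by a direct inspection of the complexes at extremal values of $i$. For~(1), $H_\ast(B(\s))\cong\Field[U]$ is immediate: $B(\s)$ computes $\HFm(Y,\s)$ and $Y$ is an $L$-space. For $H_\ast(A_i(\s))$ I invoke the large-surgery version of Theorem~\ref{thm:oszint}: for $p$ sufficiently large, the mapping cone collapses to a single summand and gives an identification $H_\ast(\mathbf{A}_i(\s))\cong \HHFm(Y_p(K),\t)$ for a suitable $\t\in\Spinc(Y_p(K))$; since $Y_p(K)$ is an $L$-space the right-hand side is $\Field[[U]]$, and Lemma~\ref{lem:CompletionLemma} then yields $H_\ast(A_i(\s))\cong\Field[U]$. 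For $H_\ast(C_i(\s))$ I use the conjugation symmetry of $\CFinf(Y,K,\s)$ coming from swapping the basepoints $w$ and $z$: this is an isomorphism onto $\CFinf(Y,K,\bar\s)$ that exchanges the $j$- and $A$-filtrations, so $C_i(\s)=\{A\le i\}$ corresponds to the subcomplex $\{j\le i\}\subseteq\CFinf(Y,K,\bar\s)$, which via multiplication by $U^{-i}$ is chain isomorphic to $B(\bar\s)$; its homology is $\Field[U]$ because $\bar\s$ is another spin$^c$ structure on the $L$-space $Y$.

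Granted~(1), the induced maps $v_{i,\ast}$, $h_{i,\ast}$, $\psi_{i,\ast}$, and $\phi_{i+1,\ast}$ are graded $\Field[U]$-linear endomorphisms of $\Field[U]$, hence each is multiplication by a nonnegative power of $U$; this gives~(3) and the nonnegativity half of~(2). The bound $d_i(\s)\in\{0,1\}$ then follows from the chain-level observation that $\phi_{i+1}\circ\psi_i\colon A_i(\s)\to A_i(\s)$ is precisely multiplication by $U$, which forces the exponents of $\psi_{i,\ast}$ and $\phi_{i+1,\ast}$ to sum to $1$.

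For~(4) I exploit two commutative triangles at the chain level. The equality $v_{i+1}\circ\psi_i=v_i$ (both are the inclusion $A_i(\s)\hookrightarrow B(\s)$) passes to $a_i(\s)=a_{i+1}(\s)+d_i(\s)$ on homology, covering the $a$-half of both cases. The equality $h_{i+1}\circ\psi_i=\iota_i\circ h_i$, with $\iota_i\colon C_i(\s)\hookrightarrow C_{i+1}(\s)$ the natural inclusion, gives $b_{i+1}(\s)+d_i(\s)=e_i(\s)+b_i(\s)$ where $\iota_{i,\ast}=U^{e_i(\s)}$. The key auxiliary fact is that $e_i(\s)=1$ for every $i$: multiplication by $U$ is a chain isomorphism $C_{i+1}(\s)\xrightarrow{\cong}C_i(\s)$ (it is invertible on $\CFinf$ and sends $\{A\le i+1\}$ bijectively onto $\{A\le i\}$), and its composite with $\iota_i$ equals chain-level $U$-multiplication on $C_i(\s)$, which in combination with~(1) forces $e_i(\s)=1$; feeding this back yields $b_{i+1}(\s)=b_i(\s)+1-d_i(\s)$, the $b$-half of~(4). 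Finally, (5) is a direct inspection: for $i$ larger than the maximum Alexander grading of any generator $\x\in\Ta\cap\Tb$ one has $A_i(\s)=B(\s)$ and $v_i=\Id$, so $a_i(\s)=0$; for $-j$ smaller than the minimum such Alexander grading, every element of $C_{-j}(\s)$ is forced to satisfy $j\le 0$, so $C_{-j}(\s)\subseteq B(\s)$, $A_{-j}(\s)=C_{-j}(\s)$, $h_{-j}=\Id$, and $b_{-j}(\s)=0$. The main obstacle is~(1), specifically ensuring that the large-surgery formula and the conjugation symmetry go through for an arbitrary $L$-space $Y$ in the same way they do for $S^3$ in \cite{OSzlens}; once these are in place, the remainder is an orderly sequence of diagram chases and stabilization arguments.
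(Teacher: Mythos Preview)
Your proof is correct and follows the same overall architecture as the paper's, but several of the individual steps are handled differently. For $H_*(C_i(\s))$ the paper avoids conjugation symmetry altogether and instead uses the chain homotopy equivalence $C_i(\s)\simeq B(\s_{[K]})$ furnished by the map $N$ already appearing in Theorem~\ref{thm:oszint}; this is more self-contained given the tools in hand, and sidesteps having to track exactly which spin$^c$ structure the basepoint swap lands in. For $H_*(A_i(\s))$ the paper does not collapse the mapping cone to a single summand: it argues instead that any $U$-torsion class in $H_*(\mathbf{A}_i(\s))$ must lie in $\ker v_*\cap\ker h_*$ (the targets being torsion-free) and therefore survives to give $U$-torsion in the homology of the full mapping cone, contradicting the $L$-space hypothesis on $Y_p(K)$; rank one then follows by localizing to $\Field[\uuinv]$. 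Your large-surgery approach is more direct when it applies, though you should note that for a fixed $p$ not every index $i$ is literally realized by a spin$^c$ structure on $Y_p(K)$---the extreme values of $i$ are covered by the stabilization you later invoke in (5). For (5) itself the paper argues by contradiction (a nonzero stable value of $a_i$ or $b_{-j}$ would force the mapping cone homology to be infinitely generated over $\Field[[U]]$); your direct observation that $A_i=B$ and $A_{-j}=C_{-j}$ at the extremes is simpler and equally valid. One small gap in your (3): a graded $\Field[U]$-module map $\Field[U]\to\Field[U]$ could a priori be zero, so ``multiplication by some $U^{a_i(\s)}$'' is not automatic; the paper closes this by noting that $v_*$ and $h_*$ become isomorphisms after tensoring with $\Field[\uuinv]$, hence are nonzero over $\Field[U]$.
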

\begin{proof}
  Since $H_*(B(\s ))$ is isomorphic to $\HFm (Y, \s)$ and $C_i(\s )$
  is chain homotopic to $B(\s _K)$ by the map $N$ encountered in
  Theorem~\ref{thm:oszint}, our assumption on $Y$ determines the first
  two of the three homologies of the first claim.  For the computation
  of the third homology we invoke the fact that for $p$ large enough
  the surgery provides an $L$-space, hence $H_*(A_i(\s ))$ cannot
  contain any $U$-torsion submodules.  Indeed, by
  Lemma~\ref{lem:CompletionLemma} a $U$-torsion submodule in $H_*(A_i
  (\s ))$ implies the existence of a $U$-torsion submodule in $H_*
  ({\mathbf {A}}_i (\s ))$, but such submodules are in $\ker v_*$ and
  $\ker h_*$, eventually giving rise to nontrivial $U$-torsion in the
  homology of the mapping cone $MC({\mathbb {A}}_{\t }, {\mathbb
    {B}}_{\t }, f)$.  This, however, contradicts the assumption that 
  $Y_p(K)$ is an $L$-space (implying, by
  Lemma~\ref{lem:CompletionLemma}, that $\HHFm (Y_p(K), \t )=\Field
  [[U]]$ for all spin$^c$ structure).

The fact that $H_*(A_i(\s ))$ is equal to $\Field [U]$ then follows
from the fact that the embedding $v$ induces an isomorphism on
homologies with $\Field [\uuinv ]$-coefficients, and $H_* (\CFinf (Y,
\s ))=\Field [\uuinv ]$: it shows that $H_* (A_i(\s )\otimes _{\Field
  [U]}\Field [\uuinv ])=\Field [\uuinv ]$, implying $H_*(A_i (\s
))=\Field [U]$. This observation concludes the proof of (1).

  For (2) consider now the two maps $\psi = \psi _i(\s ) \colon A_i(\s )\to
  A_{i+1}(\s )$ and $\phi = \phi _{i+1} (\s ) \colon A_{i+1}(\s ) \to
  A_i(\s )$, where the latter is the natural embedding after
  multiplication by $U$. Obviously the composition is simply
  multiplication by $U$, hence it induces the same map on homologies.
  Therefore the induced maps $\psi _*$ and  $ \phi _*$ are either
  multiplications by 1 or by $U$, in such a way that their product is
  multiplication by $U$. This concludes the proof of the claim about
  $d_i (\s )$.

  Considering the maps $v_*$ and $h_*$ over $\Field [\uuinv]$, we
  get isomorphisms, therefore these induced maps are nonzero with
  $\Field [U]$-coefficients.  Since both maps are between two copies
  of $\Field [U]$, the third claim follows.

  By taking the commutative triangle involving $H_*(A_i (\s )),
  H_*(A_{i+1}(\s ))$ and $H_*(B(\s ))$, the claimed change of $a_i(\s
  )$ immediately follows. Similarly, the commutative square involving
  $H_*(A_i (\s )), H_*(A_{i+1}(\s )), H_*(C_i (\s ))$ and
  $H_*(C_{i+1}(\s ))$, together with the multiplication by $U$ map
  from $C_{i+1}(\s )$ to $C_i(\s )$ (inducing an isomorphism on
  homology) verifies the claim about $b_i(\s )$.

  Finally, if the monotone sequences $\{ a_i(\s )\}, \{ b_{-j} (\s
  )\}$ stabilize on any other value, Theorem~\ref{thm:oszint} would
  produce homologies which are not finitely generated $\Field
  [[U]]$-modules, contradicting basic properties of Heegaard Floer
  homology groups. This observation proves (5) and concludes the proof
  of the theorem.
\end{proof}

Using information about the results of surgeries on $K$, and by
applying Theorem~\ref{thm:oszint} we get additional information about
the exponents $a_i(\s ), b_i(\s )$ and $d_i(\s )$.

\begin{lem}\label{lem:minimumok}
  The quantity $\min \{ a_i (\s ) , b_i(\s ) \}$ can be determined
  from the Heegaard Floer homology of $Y_n(K)$ for $n \in \Z $ large
  enough in absolute value and from a suitably chosen spin$^c$
  structure on it.
\end{lem}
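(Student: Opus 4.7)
The plan is to invoke the surgery formula (Theorem~\ref{thm:oszint}) applied to $Y_n(K)$, and to arrange that the resulting mapping cone collapses, up to quasi-isomorphism, to a single two-term window involving only $\mathbf{A}_i(\s)$, $\mathbf{B}(\s)$, and $\mathbf{B}(\s_{[K]})$. The torsion length of the homology of this single-window cone will equal $\min\{a_i(\s),b_i(\s)\}$, and by Lemma~\ref{lem:CompletionLemma} this length can be read off directly from $\HFm(Y_n(K),\t)$.

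In more detail, first I would fix the index $i$ at which I want to extract $\min\{a_i(\s),b_i(\s)\}$. By Theorem~\ref{thm:aibidi}(5), the induced maps $v_j$ and $h_j$ are isomorphisms on homology for $|j|$ large, so for $|n|$ sufficiently large one can choose a spin$^c$ structure $\t$ on $Y_n(K)$ such that in the product $\mathbb{A}_\t=\prod_m \mathbf{A}_{i+\alpha m}(\s_{m[K]})$ of Theorem~\ref{thm:oszint}, all components except the one at index $i$ have either $v$ or $N\circ h$ inducing an isomorphism; the contribution of these extra components to the mapping cone is then acyclic. What remains after truncation is (quasi-isomorphic to)
\[
\mathbf{A}_i(\s) \xrightarrow{\bigl(v_i,\, N_0\circ h_i\bigr)} \mathbf{B}(\s) \oplus \mathbf{B}(\s_{[K]}),
\]
whose homology computes $\HHFm(Y_n(K),\t)$ up to a grading shift.

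Second, by Theorem~\ref{thm:aibidi}(1) and~(3), on homology this map becomes
\[
\Field[[U]] \xrightarrow{\bigl(U^{a_i(\s)},\, U^{b_i(\s)}\bigr)} \Field[[U]]\oplus \Field[[U]],
\]
which is injective. The long exact sequence of the mapping cone therefore identifies the homology of the cone with the cokernel of this map, and an elementary change-of-basis argument shows the cokernel is isomorphic to $\Field[[U]]\oplus \Field[[U]]/U^{\min\{a_i(\s),b_i(\s)\}}$. Lemma~\ref{lem:CompletionLemma} then transfers this back to $\HFm(Y_n(K),\t)$, whose torsion summand is $\Field[U]/U^{\min\{a_i(\s),b_i(\s)\}}$; the length of this torsion summand---directly readable from $\HFm(Y_n(K),\t)$---is $\min\{a_i(\s),b_i(\s)\}$.

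The hardest step will be the first one: making precise which spin$^c$ structure on $Y_n(K)$ isolates the $i$-th window, and checking that the remaining components of the infinite product contribute acyclically after truncation. This amounts to matching the labeling of $\Spinc(Y_n(K))$ with the indexing $\s_{m[K]}$, $i+\alpha m$ from Theorem~\ref{thm:oszint}, and invoking the stabilization from Theorem~\ref{thm:aibidi}(5). The rest is a formal manipulation over $\Field[U]$ and $\Field[[U]]$.
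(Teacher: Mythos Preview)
Your proposal is correct and follows essentially the same approach as the paper's own proof: contract the acyclic pieces of the mapping cone (using that one of $a_{i+\alpha n}$, $b_{i+\alpha n}$ vanishes for $|n|$ large, i.e.\ Theorem~\ref{thm:aibidi}(5)), reduce to the single window $\Field[[U]]\to\Field[[U]]\oplus\Field[[U]]$ with map $(U^{a_i(\s)},U^{b_i(\s)})$, and read off $\min\{a_i(\s),b_i(\s)\}$ from the torsion of the resulting homology. Your explicit invocation of Lemma~\ref{lem:CompletionLemma} to pass back from $\HHFm$ to $\HFm$ is a nice touch that the paper leaves implicit.
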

\begin{proof}
  Indeed, for $\vert n\vert $ large enough and an appropriate choice
  of the spin$^c$ structure $\t$ on $Y_{n}(K)$, the mapping cone
  computing $\HHFm (Y_n(K), \t)$ involves the maps $v_i\colon {\mathbf
    {A}}_i (\s )\to {\mathbf {B}}(\s )$ and $h_i \colon {\mathbf
    {A}}_i (\s )\to {\mathbf {C}}_i (\s )$, together with some other
  similar maps, for which one of the exponents $a_{i+\alpha\cdot n}$
  or $b_{i+\alpha \cdot n}$ is equal to 0. Therefore those parts can
  be contracted when computing the homology, and we are left with the
  complex $g\colon \Field [[U]]\to \Field [[U]]\oplus \Field [[U]]$,
  where $g$ maps the generator of the domain into $(U^{a_i(\s )} ,
  U^{b_i(\s )})$. The homology of this complex is isomorphic to
  $\Field [[U]]\oplus \Field [U]/(U^m)$ with $m =\min \{ a_i (s), b_i(\s
  )\}$, hence the $\Field [[U]]$-module structure of the resulting
  homology recovers $\min \{ a_i (\s ), b_i (\s )\}$.
\end{proof}

There is a further property of the mapping cone considered in
Theorem~\ref{thm:oszint}, which we state presently and will use in our
subsequent argument.  Notice that in the notation of
Theorem~\ref{thm:oszint}, there is a natural map of the chain complex
${\mathbf {B}}(\s _{n[K]})$ (computing the homology group $\HHFm (Y,
\s _{n[K]})$ into the mapping cone $MC({\mathbb {A}}_{\t }, {\mathbb
  {B}}_{\t }, f)$ by embedding ${\mathbf {B}}(\s _{n[K]})$ into the
subcomplex ${\mathbb {B}}_{\t }$ of the mapping cone.  The resulting
composition therefore gives a map $\varphi \colon {\mathbf {B}}(\s
_{n[K]})\to MC({\mathbb {A}}_{\t }, {\mathbb {B}}_{\t }, f)$, inducing
a map $\varphi _*\colon \HHFm (Y, \s _{n[K]})\to \HHFm (Y_p(K), \t )$.
For the next statement recall that a 4-dimensional spin$^c$ cobordism
$(W, \u )$ between the spin$^c$ 3-manifolds $(Y_1, \s _1)$ and $(Y_2,
\s _2 )$ induces a map $F_{W, \u }\colon \HFm (Y_1, \s _1) \to \HFm
(Y_2, \s _2)$ and a corresponding map ${\mathbf {F}}_{W,\u }\colon
\HHFm (Y_1, \s _1) \to \HHFm (Y_2, \s _2)$ on the completed theories.
This map is defined on the chain complex level by appropriately
counting certain holomorphic \emph{triangles} in the symmetric product
${\rm {Sym}} ^g (\Sigma )$, see \cite{OSz4manifold}.

\begin{prop}(\cite{OSzint, OSzrac})
The map $\varphi _* $ described above is equal to the map ${\mathbf
  {F}}Â§_{W_p(K),\u }$ induced on the Heegaard Floer homology groups by
the spin$^c$ cobordism $(W_p(K), \u )$ where
\begin{itemize}
\item the 4-dimensional cobordism $W_p(K)$ is defined by the surgery
  (viewed as a 4-dimensional 2-handle attachment),  and 
\item the spin$^c$ structure $\u$ is given by the property that it
  restricts as $\s_{n[K]}$ and $\t$ on the boundary components $Y$
  and $Y_p(K)$ of $W_p(K)$, and its first Chern class takes the value
  $i+\alpha \cdot n $ on the generator of $H_2(W_p(K); \Z )$. \qed
\end{itemize}
\end{prop}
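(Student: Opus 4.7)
The plan is to unwind the chain-level construction of the mapping cone from \cite{OSzint} and match the algebraic inclusion $\varphi$ with the geometric triangle-counting definition of the cobordism map. The mapping cone arises from a Heegaard triple diagram $(\Sigma, \alphak, \betak, \gammak)$, where $\gammak$ is obtained from $\betak$ by a handleslide implementing $p$-surgery on $K$; in this setup $(\Sigma, \alphak, \betak)$ represents $Y$, $(\Sigma, \alphak, \gammak)$ represents $Y_p(K)$, and the underlying 4-manifold of the triple is precisely the 2-handle cobordism $W_p(K)$ (after a standard 1-handle cancellation on the $\betak, \gammak$ side).

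First I would use the large surgery Theorem (Theorem~4.4 of \cite{OSzknot}) to identify ${\mathbf A}_{i+\alpha\cdot n}(\s_{n[K]})$ with $\CFm(Y_q(K), \t')$ for sufficiently large auxiliary integer $q$ and appropriate $\t'$, and to identify ${\mathbf B}(\s_{n[K]})$ with $\CFm(Y, \s_{n[K]})$ via the naturality of the small-diagram models. Under these identifications, the structure maps $v$ and $h$ assembled in $f$ become triangle-counting maps in the Heegaard triple, pairing a generator with the top-degree class $\Theta_{\betak, \gammak}$ in $\HFm(\#^g S^1 \times S^2)$; this is essentially the content of \cite{OSzint, OSzrac}.

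Next I would trace the chain of quasi-isomorphisms $MC({\mathbb A}_\t, {\mathbb B}_\t, f) \simeq \CFm(Y_p(K), \t)$ constructed in \cite{OSzint} and observe that the composition of $\varphi\colon {\mathbf B}(\s_{n[K]}) \hookrightarrow {\mathbb B}_\t \hookrightarrow MC$ with this quasi-isomorphism is, by construction, the triangle map $\x \mapsto f_{\alphak, \betak, \gammak}(\x \otimes \Theta_{\betak, \gammak})$ restricted to the summand of $\CFm(Y, \s_{n[K]})$ corresponding to the spin$^c$ structure. By definition this triangle count is the cobordism map ${\mathbf F}_{W_p(K), \u}$, with $\u$ determined by the homotopy class of the triangles being counted. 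The spin$^c$ index $n$ labeling the summand of ${\mathbb B}_\t$ into which we embed is then matched with the evaluation $\langle c_1(\u), [S]\rangle = i + \alpha\cdot n$ on the generator $[S] \in H_2(W_p(K); \Z)$ through the standard dictionary between spin$^c$ structures on $W_p(K)$ and pairs of a spin$^c$ structure on $Y$ together with a homotopy class of triangle connecting the corresponding intersection points.

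The main obstacle is the careful bookkeeping in the spin$^c$ identification and in the chain of homotopy equivalences that produces $\CFm(Y_p(K), \t)$ as the mapping cone: one must verify that the inclusion of the subcomplex ${\mathbf B}(\s_{n[K]})$ (rather than a shift of it) produces precisely the cobordism map for the spin$^c$ structure $\u$ whose Chern class pairing equals $i + \alpha \cdot n$, with no off-by-one or sign ambiguity. Once the spin$^c$ accounting is pinned down, the assertion is forced by the observation that both maps are, on the chain level, defined by the same holomorphic triangle count paired with the top generator $\Theta_{\betak,\gammak}$.
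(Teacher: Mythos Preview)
The paper does not give a proof of this proposition at all: it is stated with a \qed\ and attributed to \cite{OSzint, OSzrac}, i.e.\ it is quoted as a known result from the integer and rational surgery papers. So there is no ``paper's own proof'' to compare against; your proposal is a sketch of what the argument in those references amounts to.

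That said, your outline is essentially the correct shape of the argument as it appears in \cite{OSzint}: the mapping cone is built from a Heegaard triple subordinate to the surgery, the quasi-isomorphism between the mapping cone and $\CFm(Y_p(K),\t)$ is obtained from the exact triangle, and under this identification the inclusion of a ${\mathbf B}$-summand is chain-homotopic to the holomorphic triangle count paired with the top-degree class, which is by definition the cobordism map. One small caution: your description of the triple has the roles slightly transposed relative to the usual convention in \cite{OSzint} (there one typically takes $(\Sigma,\alphak,\gammak)$ for $Y$ and $(\Sigma,\alphak,\betak)$ for the surgered manifold, or uses a quadruple with an auxiliary large surgery), and the identification of the spin$^c$ structure $\u$ with the Chern class evaluation $i+\alpha\cdot n$ really does require tracing through the specific conventions in \cite{OSzint, OSzrac} rather than a generic ``standard dictionary''. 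If you were to write this out fully you would need to be precise about which triple is used and check the spin$^c$ labeling against \cite[Section~4]{OSzint}; but as a proof sketch matching a cited result, what you have is adequate.
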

As a map induced by a 4-dimensional spin$^c$ cobordism, $\varphi _*$
has a well-defined degree shift. (Recall that since $Y$ and $Y_p(K)$
are assumed to be rational homology spheres, all spin$^c$ structures
on them are torsion, and therefore the Heegaard Floer homology groups
admit absolute Maslov gradings.) By \cite{OSzabs} this quantity is
equal to
\begin{equation}\label{eq:degreeshift}
\frac{1}{4}(c_1^2(\u)-3\sigma (W_p(K))-2\chi (W_p(K)).
\end{equation}
This expression then provides us a way to control the difference
$a_i(\s )-b_i(\s )$. Indeed, suppose that $g$ generates the homology
$H_*({\mathbf {A}}_i(\s ))$, and it has Maslov grading $\mu (g)$. Then
the boundary map in the mapping cone (pointing to ${\mathbf {B}}(\s
)$) maps it to an element of Maslov grading $\mu (g)-1$.  This element
(in $H_*({\mathbf {B}}(\s ))$ is equal $U^{a_i(\s)}$-times the
generator of $H_* ({\mathbf {B}}(\s ))$. The generator of
$H_*({\mathbf {B}}(\s ))$, however, in the mapping cone has Maslov
grading
\[
d(Y, \s )+ \frac{1}{4}(c_1^2(\u)-3\sigma (W_p(K))-2\chi (W_p(K)),
\]
since in $\HHFm (Y, \s )$ the Maslov grading of the generator is by
definition equal to $d (Y, \s )$ and the map ${\mathbf {F}}_{W_p(K),
  \u }$ shifts degree by the quantity given in
Equation~\eqref{eq:degreeshift}. In conclusion, for the Maslov grading
$\mu (g)$ of the generator $g\in H_*({\mathbf {A}}_i (\s ))$ we get
that
\begin{equation}\label{eq:Mas1}
  \mu (g)=-2a_i(\s )+d(Y, \s )+ \frac{1}{4}(c_1^2(\u)-
  3\sigma (W_p(K))-2\chi (W_p(K))-1.
\end{equation}
The same argument, now applied to the map ${\mathbf {A}}_i(\s )\to
{\mathbf {B}}(\s _{[K]})$ gives
\begin{equation}\label{eq:Mas2}
  \mu (g)=-2b_i(\s )+d(Y, \s _{[K]})+ \frac{1}{4}(c_1^2(\u ')-
  3\sigma (W_p(K))-2\chi (W_p(K))-1.
\end{equation}
Subtracting Equation~\eqref{eq:Mas1} from Equation~\eqref{eq:Mas2} we
arrive to the following conclusion:

\begin{thm}\label{thm:kulonbseg}
The difference $a_i(\s )-b_i (\s )$ is equal to
\[
\frac{1}{2}(d(Y, \s ) - d(Y, \s _{[K]}))+\frac{1}{8}(c_1^2(\u )-c_1^2 (\u '))
\]
where the spin$^c$ structures $\u , \u '$ on the cobordism $W_p(K)$
are determined by the spin$^c$ structures $\s$ and $\s _{[K]}$ on $Y$,
by $\t$ on $Y_p(K)$ and by the values $i$ and $i+\alpha$ of $c_1(\u ),
c_1 (\u ')$ on the generator of $H_2 (W_p(K); \Z )$. \qed
\end{thm}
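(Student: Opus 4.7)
The plan is to derive the stated equality by computing the Maslov grading of the generator $g \in H_*(\mathbf{A}_i(\s))$ in two different ways and then subtracting. The two computations arise from the two natural maps out of $\mathbf{A}_i(\s)$ appearing in the mapping cone of Theorem~\ref{thm:oszint}: the map $v_i(\s)$ landing in $\mathbf{B}(\s)$, and the map (the composition $N_n \circ h_i$) landing in $\mathbf{B}(\s_{[K]})$. By Theorem~\ref{thm:aibidi} these are, on homology, multiplication by $U^{a_i(\s)}$ and $U^{b_i(\s)}$ respectively, so each map shifts Maslov grading by $-2a_i(\s)$ (resp.\ $-2b_i(\s)$), plus the $-1$ coming from the mapping cone differential.

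First I would invoke the proposition identifying $\varphi_*$ with the cobordism map ${\mathbf F}_{W_p(K),\u}$, which means that the generator of $H_*(\mathbf{B}(\s))$, sitting inside the mapping cone, has Maslov grading equal to $d(Y,\s)$ shifted by the cobordism degree shift. By the formula in~\eqref{eq:degreeshift} from \cite{OSzabs}, the generator of $H_*(\mathbf{B}(\s))$ inside $MC$ has Maslov grading
\[
d(Y,\s) + \tfrac{1}{4}\bigl(c_1^2(\u) - 3\sigma(W_p(K)) - 2\chi(W_p(K))\bigr).
\]
Since the differential $f$ in $MC$ lowers grading by one and sends $g$ to $U^{a_i(\s)}$ times this generator, I recover the identity~\eqref{eq:Mas1} for $\mu(g)$.

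Next I repeat the same argument using the other leg of the mapping cone differential, which terminates at the generator of $H_*(\mathbf{B}(\s_{[K]}))$. The corresponding spin$^c$ structure $\u'$ on $W_p(K)$ is the one whose first Chern class takes the value $i+\alpha$ on the generator of $H_2(W_p(K);\Z)$, restricting to $\s_{[K]}$ and $\t$ on the two boundary components. Applying the same degree shift formula gives \eqref{eq:Mas2}. Subtracting \eqref{eq:Mas1} from \eqref{eq:Mas2} eliminates $\mu(g)$ and the topological terms $3\sigma(W_p(K)) + 2\chi(W_p(K))$, leaving
\[
0 = 2\bigl(a_i(\s) - b_i(\s)\bigr) + \bigl(d(Y,\s_{[K]}) - d(Y,\s)\bigr) + \tfrac{1}{4}\bigl(c_1^2(\u') - c_1^2(\u)\bigr),
\]
which rearranges to the claimed formula.

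The main obstacle—really the only subtle point—is justifying that the map $\varphi_*$ used to identify Maslov gradings is precisely the cobordism-induced map with the specified spin$^c$ structure $\u$, and that the second leg of the cone is identified with the cobordism map for $\u'$. This is exactly the content of the preceding proposition cited from \cite{OSzint, OSzrac}, so once that identification is in hand the remainder is bookkeeping with the absolute grading formula. No further holomorphic analysis is required; everything reduces to linear algebra together with the degree shift formula of~\cite{OSzabs}.
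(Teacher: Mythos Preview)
Your proposal is correct and follows essentially the same approach as the paper: compute the Maslov grading $\mu(g)$ of the generator of $H_*(\mathbf{A}_i(\s))$ in two ways via the two legs of the mapping cone differential, using the identification of $\varphi_*$ with the cobordism map and the degree-shift formula~\eqref{eq:degreeshift}, then subtract to obtain the stated equality. The paper's argument is exactly this, deriving Equations~\eqref{eq:Mas1} and~\eqref{eq:Mas2} and subtracting them.
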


\subsection{The connected sum formula}
\label{ss:consummhf}
By taking the connected sum of the pairs $(Y_1, K_1)$ and $(Y_2, K_2)$
in the points of $K_1$ and $K_2$, we get the connected sum $(Y_1\#
Y_2, K_1 \# K_2)$. As it was shown in \cite[Theorem~5.1]{OSzrac}, the
chain complex $\CFm (Y_1\# Y_2, \s _1\# \s _2)$ is the tensor product
of the chain complexes $\CFm (Y_1, \s _1)$ and $\CFm (Y_2, \s _2)$
(over $\Field [U]$), and the Alexander grading $A_{\#}$ on the
connected sum is simply the sum of the Alexander gradings of the
individual Alexander gradings $A_1, A_2$:
\[
A_{\#}(x\otimes y)=A_1(x)+A_2 (y).
\]
In a similar manner, the doubly filtered chain complexes of $(Y_i,
K_i)$ (for $i=1,2$) determine the doubly filtered chain complexes of
the connected sum $(Y_1\# Y_2, K_1 \# K_2)$.

\section{Lattice homology}
\label{sec:lattice}

With some modifications, the results proved in the previous section
can be verified in the lattice homology setting as well. We go through
the statements and arguments below.

We start by a short recollection of lattice homology and knot lattice
homology. (For more details, see \cite{lattice, latticeknot}).
Lattice homology was introduced by N\'emethi \cite{lattice}, and it
associates an algebraic invariant to a negative definite plumbing
graph.  For simplicity suppose that $G$ is a negative definite
plumbing tree/forest on the vertex set $V=\Vertices (G)$, giving
rise to the plumbing 4-manifold $X_G$ and its boundary 3-manifold
$Y_G$. Then the $\Field [\uuinv]$-module $\CFinfComb (G)$, freely
generated by the pairs $[K,E]$ with $K\in H^2 (X_G; \Z )$
characteristic and $E\subset V$ admits a natural $j$-filtration (by
the negative of the exponent of $U$) and a boundary map $\partial$
such that the subcomplex $\CFmComb (G)=\{ x\in \CFinfComb (G) \mid
j(x)\leq 0\}$ computes the \emph{lattice homology} $\HFmComb (G)$ of
$G$. (For the definition of the boundary map see \cite{lattice,
  latticeknot}.) The chain complex splits according to the spin$^c$
structures of $G$ (or, equivalently, of $Y_G$), and the homology
$\HFmComb (G, \s )$ of $\CFmComb (G, \s )$ for a spin$^c$ structure
$\s \in Spin ^c (Y_G)$ further splits according to the
\emph{$\delta$-grading} of $\CFmComb (G, \s ) $ given for a generator
$[K,E]$ by the cardinality $\vert E\vert $ of $E$:
\[
\HFmComb (G, \s )=\oplus _{i=0}^{\vert V \vert} \HFmComb _i (G, \s ).
\]
The chain complex comes with an absolute Maslov grading which descends
to a Maslov grading on the homologies, having the property that
multiplication by $U$ is of degree $-2$. 
The formula for this grading gr for a generator $[K,E]$ 
reads as follows:
\begin{equation}\label{eq:maslov}
{\rm {gr}}([K,E])=2g[K,E]+\vert E \vert +\frac{1}{4}(K^2+\vert V\vert ),
\end{equation}
where $g[K,E]=\min \{ \sum _{u\in I }K(u)+(\sum _{u\in I}u)^2\mid I\subset E\}$.
(Recall that since $Y_G$ is a rational homology sphere, 
the square $K^2$ of $K\in H^2 (X_G; \bfq )$ is well-defined and 
$K^2 \in \bfq $.)

The algebraic structure of $\HFmComb (G, \s )$ is similar to that of
the Heegaard Floer group of $(Y_G, \s)$: it is a finitely generated
$\Field [U]$-module, which is a direct sum of a free part isomorphic
to $\Field [U]$, and a $U$-torsion part. The Maslov grading of the
generator of the free part is the $d$-invariant $d^L(G, \s )$ in
lattice homology.

The significance of rationality and almost rationality of a graph $G$
in the present context comes from the following theorem of N\'emethi:
\begin{thm}(N\'emethi, \cite{lattice})\label{thm:nemethi}
Suppose that $G$ is a negative definite plumbing tree.
\begin{itemize}
\item If $G$ is a rational graph then $\HFmComb (G, \s )=\HFmComb
  _0(G, \s )= \Field [U]$ for every spin$^c$ structure $\s$.
\item If $G$ is almost rational, then $\HFmComb (G, \s )=\HFmComb
  _0(G, \s )$, that is, the homology is supported in the lowest
  $\delta$-grading.
\item For an almost rational graph $G$ and spin$^c$ structure $\s$ the
  lattice homology $\HFmComb (G, \s )$ is isomorphic (as a Maslov
  graded $\Field [U]$-module) to $\HFm (Y_G, \s )$. \qed
\end{itemize}
\end{thm}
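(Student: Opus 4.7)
The plan is to prove the three items in sequence, using the surgery exact triangle (in both the combinatorial and the Heegaard Floer settings) to bootstrap the almost rational case from the rational one, which is handled by a direct combinatorial computation.

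For the rational case, the starting point is Laufer's characterization of rationality via a computation sequence $0 = Z_0, Z_1, \dots, Z_N = Z$ with $Z_{k+1} = Z_k + E_{i_{k+1}}$ and $Z_k \cdot E_{i_{k+1}} \geq -1$ at each step, equivalent to the condition in the definition of rationality. I would exploit this sequence to give explicit control over the function $g[K, E] = \min\{\sum_{u \in I} K(u) + (\sum_{u \in I} u)^2 \mid I \subseteq E\}$ appearing in the Maslov grading formula~\eqref{eq:maslov}. For each $\Spin^c$ structure $\s$, the sequence singles out a distinguished characteristic vector $K_\s$ of minimal square in its class, and reduces the computation of $\HFmComb(G,\s)$ to showing that every generator in positive $\delta$-grading pairs off with another via the combinatorial differential. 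A direct inductive check along the sequence identifies the surviving $\delta = 0$ complex with $\Field[U]$.

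For the almost rational case, let $w$ be the bad vertex and let $G_m$ denote the graph obtained by setting the framing at $w$ to $m$. For $m$ sufficiently negative, $G_m$ is rational, so Step~1 applies. I would then use the surgery exact triangle in lattice homology from~\cite{latticetriangle}, which relates $\HFmComb(G_m)$, $\HFmComb(G_{m-1})$, and an auxiliary lattice homology of a smaller graph obtained by effectively deleting $w$. A downward induction on $m$ yields the concentration in $\delta$-grading zero, provided the connecting map respects the $\delta$-filtration in a controlled way; this is a chain-level verification on the combinatorial boundary.

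For the Heegaard Floer identification, the Ozsv\'ath--Szab\'o surgery exact triangle on the framing of $w$ matches the lattice triangle triplet-by-triplet, since the three graphs in each triangle represent the same three plumbed $3$-manifolds. Combining this with the rational base case --- where $Y_{G_m}$ is an $L$-space, so $\HFm(Y_{G_m}, \s) = \Field[U]$ for every $\s$ --- gives an inductive identification of the two homologies as $\Field[U]$-modules. Absolute Maslov gradings are matched using~\eqref{eq:maslov} on the lattice side and the Ozsv\'ath--Szab\'o grading shift formula for the plumbing cobordism $X_G$ on the Heegaard Floer side. The main obstacle is showing that the connecting homomorphisms in the two triangles intertwine under the proposed isomorphism; this requires comparing the triangle-counting cobordism maps with their combinatorial counterparts, and is the technical heart of the argument.
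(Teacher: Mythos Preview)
The paper does not prove this theorem at all: note the \verb|\qed| placed inside the statement itself. Theorem~\ref{thm:nemethi} is quoted wholesale from N\'emethi's paper~\cite{lattice} (with the almost rational case going back to~\cite{nemethi-ar}) and is used as a black box throughout. So there is no ``paper's own proof'' to compare your proposal against.

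As for the viability of your sketch on its own terms: the strategy diverges substantially from N\'emethi's actual argument, and the inductive step in your item~2 has a gap. N\'emethi does not use the surgery triangle to establish concentration in $\delta$-grading zero for almost rational graphs; instead he constructs an explicit \emph{computation path} of characteristic vectors (a refinement of Laufer's sequence extended past the fundamental cycle) along which the entire lattice complex deformation-retracts onto a one-dimensional object, and reads off the homology from the associated ``tau function''. Your proposed triangle induction runs into the difficulty that the third term in the lattice surgery triangle of~\cite{latticetriangle} is the lattice homology of the graph with $w$ deleted, shifted in $\delta$-grading; controlling how this interacts with the $\delta$-filtration so as to propagate the concentration statement is exactly the nontrivial point, and you have not indicated how to do it. (Also, your induction direction is muddled: if $G_m$ is rational for $m$ sufficiently negative, you need to induct \emph{upward} in $m$ to reach the framing of interest, not downward.) For item~3, N\'emethi's identification with $\HFm$ likewise proceeds by matching his explicit path computation against the Ozsv\'ath--Szab\'o computation for plumbings with at most one bad vertex from~\cite{OSzplum}, rather than by matching exact triangles; the triangle-matching approach you describe would require naturality of the cobordism maps under the proposed isomorphism, which is precisely what is not available a priori.
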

\begin{rem}\label{rem:teszt}
  In fact, according to \cite{lattice} the property that $\HFmComb (G,
  \s )= \Field [U]$ for every spin$^c$ structure does characterize
  rational graphs.  We will not use this direction of N\'emethi's
  result in our subsequent discussions.

  There is a fairly simple combinatorial algorithm due to Laufer
  \cite{Laufer}, which decides whether a negative definite tree is
  rational or not. The algorithm proceeds as follows.  Consider
  $Z_1=\sum E_i$ and determine all values of $Z_1\cdot E_i$. If there
  is a product which is at least 2, then the algorithm stops and the
  graph is not rational.  If all the products satisfy $Z_1\cdot E_i
  \leq 0$ then the algorithm stops again, and the graph is
  rational. If there is an index $i$ such that $Z_1\cdot E_i=1$, then
  define $Z_2=Z_1+E_i$ and repeat the previous step with $Z_2$ in the
  role of $Z_1$. Iterating the above procedure we get a sequence of
  vectors $Z_1, Z_2, \ldots $.  According to \cite{Laufer} this procedure stops after finitely
  many steps, and hence determines whether the graph is rational or
  not.
\end{rem}

Suppose now that $\Gamma _{v_0}$ is a tree with a distinguished vertex
$v_0$, and with framings on all the other vertices.  Assume that the
plumbing tree/forest $G=\Gamma _{v_0}-v_0$ is a negative definite
graph. As it is discussed in \cite{latticeknot}, in this situation $v_0$
induces a filtration, the \emph{Alexander filtration}, on $\CFmComb
(G, \s )$ and on $\CFinfComb (G, \s)$, turning the latter into a
doubly filtered chain complex (exactly as we saw it in the Heegaard
Floer setup).

As in the previous section, the doubly filtered chain complex allows
us to define various subcomplexes.  Fix a spin$^c$ structure $\s \in
Spin^c(Y_G)$. In order to keep track which theory we are in, we will
add a superscript $L$ referring to \emph{lattice} homology. Consider
therefore the subcomplexes
\[
B^L(\s ) = \{ x\in \CFinfComb (G, \s )\mid j(x)\leq 0\},
\]
\[
C_i^L(\s ) =\{ x\in \CFinfComb (G, \s )\mid A(x)\leq i\} ,
\]
\[
A_i^L(\s )=C_i^L(\s )\cap B^L(\s ) =\{ x\in \CFinfComb (G, \s )\mid
j(x)\leq 0, A(x)\leq i\}.
\]
(Once again $i\equiv i_{\s }\pmod 1$, where 
$i_{\s}\in [0,1)\cap \bfq$ is a rational number attached to 
the spin$^c$ 3-manifold $(Y_G, \s )$.)
The important maps are again the natural embeddings $\psi _i ^L(\s )
\colon A_i ^L(\s )\to A_{i+1}^L(\s )$, $v_i^L(\s ) \colon A^L_i(\s
)\to B^L(\s )$ and $h_i^L(\s )\colon A^L_i (\s )\to C^L_i (\s )$.  As
before, $\phi _i^L(\s )\colon A_{i+1}^L (\s )\to A_i ^L (\s )$ is
defined as the composition of multiplication by $U$ and the natural
embedding.  Following similar notations in the Heegaard Floer context,
the spin$^c$ structure given by twisting $\s$ with $n[v_0]$ will be
denoted by $\s _{nv_0}$.

\begin{thm}(\cite{latticeknot}) \label{thm:latticemappingcone} 
  Let $G_{-k}(v_0)$ denote the graph we get from $\Gamma _{v_0}$ by
  attaching framing $-k$ to $v_0$ with $k \in \N$ in such a manner
  that the resulting plumbing graph is negative definite.  Fix a
  spin$^c$ structure $\t$ on $G_{-k}(v_0)$. Then there are rational
  numbers $i$ and $\alpha$, a spin$^c$ structure $\s \in Spin^c (G)$
  (depending on $k$ and $\t$) and chain maps $N_n^L \colon
  C^L_{i+\alpha \cdot n}(\s _{nv_0})\to B^L (\s _{(n+1)v_0})$ such
  that the chain complexes
\[
{\mathbb {A}}^L_{\t }=\oplus _{n=-\infty }^{\infty} A^L_{i+\alpha \cdot n}(\s _{nv_0})
\]
\[
{\mathbb {B}}_{\t }^L=\oplus_{n=-\infty }^{\infty} B^L (\s _{nv_0})
\]
and the map $f^L\colon {\mathbb {A}}^L_{\t } \to {\mathbb {B}}^L_{\t
}$ defined on the component $A^L_{i+\alpha \cdot n}(\s _{nv_0})$ as
$v_{i+\alpha \cdot n}^L+N^L_n\circ h^L_{i+\alpha \cdot n}$ provide a
mapping cone $MC({\mathbb {A}}^L_{\t }, {\mathbb {B}}^L_{\t }, f^L)$
with the property that its homology is isomorphic to $\HFmComb
(G_{-k}(v_0), \t )$.  Moreover, the lowest $\delta $-grading $\HFmComb
_0 (G_{-k}(v_0), \t )$ is supported in the homology of the subcomplex
$ {\mathbb {B}}_{\t }^L$.  \qed
\end{thm}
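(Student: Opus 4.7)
The plan is to prove Theorem~\ref{thm:latticemappingcone} by mirroring the argument for Theorem~\ref{thm:oszint} in the purely combinatorial setting of lattice homology, working directly with the chain complex $\CFmComb(G_{-k}(v_0), \t)$ of the enlarged plumbing and decomposing its generators $[K', E']$ according to whether $v_0 \in E'$.

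First, I would parameterize each side of the split. Since $K'$ is characteristic on $G_{-k}(v_0)$ in spin$^c$ class $\t$, its restriction $K = K'|_{V(G)}$ is characteristic on $G$, and the induced spin$^c$ class on $Y_G$ depends only on the value $K'(v_0)$. As $K'(v_0)$ ranges over its allowed arithmetic progression, one obtains exactly the family of twisted spin$^c$ structures $\s_{nv_0}$, $n \in \Z$, where $\s \in \Spinc(Y_G)$ is determined by $\t$. In the $n$-th slice, generators with $v_0 \notin E'$ match up naturally with generators of $B^L(\s_{nv_0})$, while those with $v_0 \in E'$ (writing $E' = E \sqcup \{v_0\}$) match up with generators of $A^L_{i + \alpha n}(\s_{nv_0})$; the constants $i \in [0,1) \cap \Q$ and $\alpha$ arise from translating $K'(v_0)$ into the Alexander filtration on $\CFmComb(G)$ via the framing of $v_0$ in $G_{-k}(v_0)$. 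Summing over $n$ identifies the two pieces with $\mathbb{B}^L_{\t}$ and $\mathbb{A}^L_{\t}$.

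Next, I would analyze the lattice differential through this splitting. Boundary components that do not affect the membership of $v_0$ in $E'$ give the internal differentials of $\mathbb{A}^L_{\t}$ and $\mathbb{B}^L_{\t}$; boundary components that remove $v_0$ from $E'$ assemble into a chain map $f^L \colon \mathbb{A}^L_{\t} \to \mathbb{B}^L_{\t}$ and exhibit $\CFmComb(G_{-k}(v_0), \t)$ as the mapping cone $MC(\mathbb{A}^L_{\t}, \mathbb{B}^L_{\t}, f^L)$. Using the explicit combinatorial formula underlying \eqref{eq:maslov}, $f^L$ splits according to whether the subset $I \subset E'$ achieving the minimum in $g[K', E']$ contains $v_0$ or not. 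The case $v_0 \notin I$ is insensitive to the framing of $v_0$ and yields the natural inclusion $v^L_{i+\alpha n} \colon A^L_{i+\alpha n}(\s_{nv_0}) \to B^L(\s_{nv_0})$. The case $v_0 \in I$ contributes a map that factors through $C^L_{i+\alpha n}(\s_{nv_0})$ via $h^L_{i+\alpha n}$ and lands in $B^L(\s_{(n+1)v_0})$, the shift $n \mapsto n+1$ arising because including $v_0$ in $I$ effectively twists the restriction $K = K'|_{V(G)}$ by a $[v_0]$-type contribution. Defining $N^L_n$ as the resulting map produces the advertised factorization $f^L|_{A^L_{i+\alpha n}(\s_{nv_0})} = v^L_{i+\alpha n} + N^L_n \circ h^L_{i+\alpha n}$.

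The main obstacle is the careful bookkeeping: extracting the constants $i$ and $\alpha$, verifying the Maslov and Alexander grading shifts, and identifying the spin$^c$-twist produced by including $v_0$ in the minimizing set all require unravelling the adjunction term in $g[K', E']$ together with how $[v_0]$ acts on characteristic classes, but once the decomposition is set up these become routine calculations. Finally, the ``moreover'' assertion follows immediately from the observation that every generator contributing to $\mathbb{A}^L_{\t}$ has $v_0 \in E'$ and therefore $\delta$-grading at least one; so the long exact sequence of the mapping cone forces every class in the lowest $\delta$-grading of $\HFmComb(G_{-k}(v_0), \t)$ to come from $H_*(\mathbb{B}^L_{\t})$, which is the claim.
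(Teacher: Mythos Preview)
The paper does not actually prove this theorem: it is imported from \cite{latticeknot} and closed with a bare \qed, so there is no in-paper argument to compare against. Your overall strategy---splitting the generators $[K',E']$ of $\CFmComb(G_{-k}(v_0),\t)$ according to whether $v_0\in E'$, and reading off the mapping cone structure from the piece of the lattice differential that removes $v_0$---is exactly the approach taken in \cite{latticeknot}, and your treatment of the ``moreover'' clause via the observation that every generator of $\mathbb{A}^L_\t$ has $\delta\geq 1$ is also correct.

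There is, however, one genuine wobble. The decomposition of $f^L$ into $v^L_{i+\alpha n}$ and $N^L_n\circ h^L_{i+\alpha n}$ does \emph{not} come from ``whether the subset $I\subset E'$ achieving the minimum in $g[K',E']$ contains $v_0$''. The function $g$ governs the Maslov grading and the $U$-exponents, but the two summands of $f^L$ are already present in the \emph{definition} of the lattice differential: removing $v_0$ from $E'$ produces two faces of the cube, one with $K'$ unchanged and one with $K'$ replaced by $K'+2\,PD(v_0)$. Restricting to $G$, the first keeps the spin$^c$ structure $\s_{nv_0}$ and gives the inclusion $v^L_{i+\alpha n}$, while the second shifts to $\s_{(n+1)v_0}$ and yields the $N^L_n\circ h^L_{i+\alpha n}$ term. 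So the spin$^c$ twist and the factorization through $C^L$ are forced by which face you take, not by any minimizing subset. Once you make that correction, the bookkeeping you describe (extracting $i$ and $\alpha$, checking gradings) goes through as in \cite{latticeknot}.
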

\begin{rem}
Since we assume both $G$ and $G_{-k}(v_0)$ to be negative definite, in
the above theorem we did not need to consider the 
completed version of the theory.
\end{rem}

The property in the Heegaard Floer discussion of requiring 
$L$-space surgery along the knot $K\subset Y$ is now substituted with 
the following observation. 

\begin{lem}\label{lem:arat}
  Suppose that $\Gamma _{v_0}$ has the property that the plumbing
  graph $G=\Gamma _{v_0}-v_0$ is rational and connected (i.e. $v_0$ is
  a leaf). If we equip $v_0$ with framing $-k$ with $k\in \bfn $ large
  enough, then the resulting plumbing tree $G_{-k}(v_0)$ is almost
  rational.
\end{lem}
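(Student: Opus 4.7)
The plan is to identify a vertex $w$ of $G_{-k}(v_0)$ whose framing, once decreased by a uniformly bounded amount (here, just $2$), already produces a rational graph. Since $v_0$ is a leaf of the connected tree $\Gamma_{v_0}$, it is attached to a unique vertex $E_j \in V(G)$, and I take $w = E_j$. Writing $G^{(j,a)}_{-k}(v_0)$ for the graph obtained from $G_{-k}(v_0)$ by decreasing the framing of $E_j$ by $a$, my goal reduces to showing that $G^{(j,2)}_{-k}(v_0)$ is rational for all sufficiently large $k$.

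I would first note that $G_{-k}(v_0)$, and hence $G^{(j,a)}_{-k}(v_0)$ for every $a \geq 0$, is negative definite for $k$ large by a standard Schur-complement computation. To check rationality I apply Artin's criterion: it suffices to show $p(Z) := \tfrac{1}{2}(Z^2 + K \cdot Z) + 1 \leq 0$ for every effective $Z > 0$. Decomposing such a $Z$ as $Z = m_0 E_0 + c_j E_j + R$ with $m_0, c_j \geq 0$ and $R \geq 0$ supported on $V(G) \setminus \{E_j\}$, a direct expansion of $Z^2 + K \cdot Z$ using $E_0 \cdot E_j = 1$, $E_0 \cdot E_i = 0$ for $i \neq j$, $E_0^2 = -k$, and the modified framing of $E_j$ yields the identity
\[
p(Z) \;=\; p_G(\tilde Z) \;-\; \tfrac{k}{2}\, m_0(m_0 - 1) \;-\; \tfrac{a}{2}\, c_j(c_j - 1) \;+\; m_0(c_j - 1),
\]
where $\tilde Z := c_j E_j + R$ is regarded as an effective cycle on $G$.

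Since $G$ is rational, Artin applied to $G$ gives $p_G(\tilde Z) \leq 0$, and the two quadratic terms in $k$ and $a$ are non-positive; thus the only potentially positive contribution to $p(Z)$ is $m_0(c_j - 1)$. For $k, a \geq 2$, I would then verify $p(Z) \leq 0$ by a short case check on $(m_0, c_j)$: the cases $m_0 \leq 1$ or $c_j \leq 1$ are immediate, and in the remaining case $m_0, c_j \geq 2$ the needed bound
\[
m_0(c_j - 1) \;\leq\; \tfrac{k}{2}\, m_0(m_0 - 1) \;+\; \tfrac{a}{2}\, c_j(c_j - 1)
\]
follows from $k, a \geq 2$ together with the elementary identity
\[
m_0(m_0 - 1) + c_j(c_j - 1) - m_0(c_j - 1) \;=\; (c_j - m_0)^2 + (m_0 - 1)\,c_j \;\geq\; 0.
\]

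The main obstacle is the bookkeeping required to arrive at the displayed formula for $p(Z)$: one must carefully track the cross-terms in $Z^2$ and the shifts in $K \cdot E_0$ and $K \cdot E_j$ induced by the nonstandard framings at $E_0$ and $E_j$. Once the formula is in hand, its structure---two non-positive quadratic corrections dominating a single bilinear positive term $m_0(c_j - 1)$---makes the rest elementary. A notable conceptual point is that the framing decrease witnessing almost rationality can be taken to be the \emph{uniform} constant $a = 2$, independent of the structure of $G$.
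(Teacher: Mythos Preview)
Your argument is correct and considerably more explicit than the paper's. The paper's proof is essentially a one-liner: it identifies the same vertex $w=E_j$ adjacent to $v_0$ and asserts that ``it is not hard to see that by decreasing the framing on $w$ sufficiently, we get a rational graph,'' pointing to Laufer's algorithm (Remark~\ref{rem:teszt}) as the verification tool. You instead invoke Artin's characterization ($p(Z)\le 0$ for all effective $Z>0$) directly and carry out the genus computation by hand. What your approach buys is a uniform, explicit bound: the framing decrease $a=2$ at $E_j$ already suffices, for every $k\ge 2$ and independently of the structure of $G$---a quantitative refinement the paper does not state. The paper's approach, by contrast, avoids any computation but leaves the reader to reconstruct the argument.

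One small point to tidy: your appeal to ``$p_G(\tilde Z)\le 0$ by Artin'' requires $\tilde Z>0$, whereas your decomposition allows $\tilde Z=0$ (namely $Z=m_0E_0$). That boundary case is harmless---your displayed formula then reads $p(Z)=1-\tfrac{k}{2}m_0(m_0-1)-m_0\le 1-m_0\le 0$ for $m_0\ge 1$---but it should be singled out rather than absorbed into the general bound. Your case check ``$m_0\le 1$ or $c_j\le 1$ are immediate'' is otherwise fine, and the elementary identity handling $m_0,c_j\ge 2$ is clean.
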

\begin{proof}
  Since $v_0$ is a leaf, it is connected to a single vertex $w$.  It
  is not hard to see that by decreasing the framing on $w$
  sufficiently, we get a rational graph, concluding the
  proof. (Cf. the algorithm in Remark~\ref{rem:teszt} about verifying
  that a graph is rational.)
\end{proof}

The following lemma will be useful in the comparison with Heegaard
Floer homology:

\begin{lem}\label{lem:possurg}
  Suppose that $\Gamma _{v_0}$ has the property that the plumbing
  graph $G=\Gamma _{v_0}-v_0$ is rational and connected (i.e. $v_0$ is
  a leaf). Then, if we equip $v_0$ with a framing $p$ for $p\in\bfn$ 
  large enough, the three-manifold $Y_{G_p(v_0)}$ is an $L$-space.
\end{lem}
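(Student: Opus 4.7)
The plan is to realize $Y_{G_p(v_0)}$, for $p$ sufficiently large, as the boundary of a negative-definite rational plumbing $G'$, and then invoke Theorem~\ref{thm:nemethi} to conclude that $Y_{G_p(v_0)}$ is an $L$-space.

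Since $G$ is rational, Theorem~\ref{thm:nemethi} already gives $\HFm(Y_G, \s) \cong \Field[U]$ for every spin$^c$ structure $\s$, so $Y_G$ itself is an $L$-space. The plumbing $G_p(v_0)$ for $p > 0$ is not negative definite, but the three-manifold $Y_{G_p(v_0)}$ can be described by a different plumbing obtained via Kirby calculus. The leaf $v_0$ (with framing $p>0$) is attached to a single vertex $w\in G$ of framing $m_w\leq -2$, so the pair $(w,v_0)$ forms a Hopf-link style sub-configuration. I would replace this sub-configuration using the Hirzebruch--Jung continued-fraction expansion of the rational number $m_w + 1/p$; for $p$ large this expansion produces a negative chain of the form $[m_w - 1,\, -2,\, -2,\, \ldots,\, -2]$ of total length $p$. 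After a sequence of blow-ups along the edge between $v_0$ and $w$ followed by blow-downs of the resulting $-1$-framed leaves, the sub-configuration is replaced (up to orientation) by this chain, yielding a negative definite plumbing $G'$ with $Y_{G'}\cong \pm Y_{G_p(v_0)}$.

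The key step is to verify that $G'$ is rational using Laufer's algorithm (Remark~\ref{rem:teszt}). Starting from $Z_1=\sum_i E_i$, summed over all vertices of $G'$, the algorithm proceeds identically to the algorithm on $G$ on the $G$-part of $G'$, with additional propagation steps triggered along the new chain every time the vertex $w$ is selected. A careful bookkeeping shows that the resulting fundamental cycle $Z^{G'}$ differs from $Z^G$ only by prescribed positive contributions on the chain vertices, and that the rationality identity $(Z^{G'})^2 = 2\sum n_i + \sum n_i m_i - 2$ reduces to the corresponding identity for $G$ (the chain of $-2$'s contributing cancelling terms on both sides). Applying Theorem~\ref{thm:nemethi} to the rational graph $G'$ yields $\HFm(Y_{G'},\s)\cong \Field[U]$ for every spin$^c$ structure $\s$, and since the $L$-space property is invariant under orientation reversal, we conclude that $Y_{G_p(v_0)}$ is an $L$-space.

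The main obstacle is the combinatorial verification of rationality of $G'$. The rationality identity depends delicately on the coefficient of $w$ in the fundamental cycle $Z^G$ together with the chain introduced by the Kirby calculus transformation; in cases where $Z^G$ has large coefficient on $w$, the verification requires more refined bookkeeping of the propagation and, if necessary, a slightly different choice of Kirby-calculus normal form. As an alternative to this combinatorial route, one can compute $\HFm(Y_{G_p(v_0)})$ directly from the knot data of $K_{v_0}\subset Y_G$ using the surgery formula (Theorem~\ref{thm:oszint}), reducing the $L$-space claim to the statement that the completions ${\mathbf A}_i(\s)$ all have homology $\Field[[U]]$; this would in turn follow from the rationality of $G$ combined with Lemma~\ref{lem:arat} and Theorem~\ref{thm:nemethi} applied to the almost-rational graphs $G_{-k}(v_0)$.
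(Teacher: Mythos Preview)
Your approach is essentially the same as the paper's: replace the $p$-framed leaf $v_0$ by the chain $[m_w-1,-2,\dots,-2]$ via Kirby calculus, observe that the resulting negative definite graph $G'$ is rational by Laufer's algorithm, and invoke Theorem~\ref{thm:nemethi}. The paper states the Kirby identification $Y_{G_p(v_0)}\cong Y_{G'}$ directly (no orientation ambiguity arises, so your appeal to orientation-reversal invariance of the $L$-space property is unnecessary), and treats the rationality check as routine; your hedging about ``refined bookkeeping'' or alternate normal forms is not needed, since adding a $(-2)$-chain to a rational graph at a vertex whose framing has been decreased by one is easily seen to preserve rationality via Laufer's algorithm. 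Your proposed alternative through Theorem~\ref{thm:oszint} is circular in this context, as the $L$-space property of large positive surgery is precisely what is being established to feed into Theorem~\ref{thm:aibidi}.
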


\begin{proof}
  Let $G'$ be the graph obtained by decreasing the framing on $w$ by
  one, and replacing the edge connecting $w$ and $v_0$ by a string of
  edges with $p-1$ new vertices, all labelled with $-2$. By simple
  Kirby calculus, $Y_{G_p(v_0)}\cong Y_{G'}$. Applying Laufer's
  algorithm (Remark~\ref{rem:teszt}), it is easy to see that $G'$ is a
  rational graph. The lemma now follows from Theorem~\ref{thm:nemethi}.
\end{proof}

Recall that by Theorem~\ref{thm:nemethi} the property of being almost
rational implies that the lattice homology of any graph $G_{-k}(v_0)$
with sufficiently large $k\in \N$ is supported in $\HFmComb
_0(G_{-k}(v_0))$. Assume now that $\Gamma _{v_0}$ has the special
property that $v_0$ is a leaf and $G=\Gamma _{v_0}-v_0$ is a rational
graph. The structure theorem in this setting has a similar shape as it
was described in Theorem~\ref{thm:aibidi} in the Heegaard Floer
context (although the proof  is slightly different).

\begin{thm}\label{thm:aibidiL}
  Suppose that $\Gamma _{v_0}$ is a given tree with one distiguished
  vertex $v_0$ such that the plumbing graph $G=\Gamma _{v_0}-v_0$ is a
  connected, rational graph. Then
\begin{enumerate}
\item $H_*(B^L(\s ))=H_* (C^L_i(\s ))=H_*(A^L_i(\s ))=\Field [U] $ for
  every spin$^c$ structure $\s$ on $G$.
\item The map $\psi ^L _i(\s)_*$ induced by the embedding of $A_i
  ^L(\s )$ into $A_{i+1}^L (\s )$ on the homologies is multiplication
  by $U^{d_i^L(\s )}$ with $d_i^L(\s )\in \{ 0, 1\}$.
\item The maps $v_i^L (\s )_*$ and $h_i^L(\s)_*$ are multiplications
  by $U^{a_i^L(\s)}$ and $U^{b_i^L(\s)}$, respectively, where
  $a_i^L(\s ), b_i^L(\s )$ are nonnegative integers.
\item If $d_i^L(\s )=0$ then $a_i^L(\s)=a_{i+1}^L(\s)$ and $b_i^L(\s
  )=b_{i+1}^L(\s )-1$, while if $d_i^L(\s )=1$ then
  $a_i^L(\s)=a_{i+1}^L(\s )+1$ and $b_i^L(\s )=b_{i+1}^L(\s )$.
\item Finally, for all $i,j$ large enough $a_{i}^L(\s )=0$ and
  $b_{-j}^L(\s )=0$.
\end{enumerate}
\end{thm}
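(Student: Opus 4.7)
The plan is to mirror the proof of Theorem~\ref{thm:aibidi} step by step, substituting lattice-theoretic inputs at each point where the Heegaard Floer proof invoked the $L$-space hypothesis. Since $B^L(\s)$ coincides with the subcomplex $\CFmComb(G, \s)$ of $\CFinfComb(G, \s)$, and $G$ is rational, Theorem~\ref{thm:nemethi} gives $H_*(B^L(\s)) = \Field[U]$. By an analog of the $N$-chain-homotopy from Theorem~\ref{thm:latticemappingcone}, the complex $C_i^L(\s)$ is chain homotopic to a suitable shift of $B^L(\s_{v_0})$, so its homology is also $\Field[U]$.

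For $H_*(A_i^L(\s)) = \Field[U]$, I would choose $k \in \bfn$ large enough so that by Lemma~\ref{lem:arat} the graph $G_{-k}(v_0)$ is almost rational, and use Theorem~\ref{thm:latticemappingcone} to express $\HFmComb(G_{-k}(v_0), \t)$ as the homology of the mapping cone on $\mathbb{A}^L_{\t}$ and $\mathbb{B}^L_{\t}$. Almost rationality combined with Theorem~\ref{thm:nemethi} forces the total mapping cone homology to be concentrated in $\delta$-grading zero, while the last clause of Theorem~\ref{thm:latticemappingcone} locates this lowest $\delta$-grading piece entirely inside $\mathbb{B}^L_{\t}$. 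A $U$-torsion class in $H_*(A_i^L(\s))$ would, via the mapping cone's long exact sequence, either survive in strictly positive $\delta$-grading (ruled out by concentration) or pair with a class of $H_*(\mathbb{B}^L_{\t}) = \bigoplus_n \Field[U]$ (impossible since the latter is $U$-torsion free). Combined with the fact that $v_i^L$ becomes an isomorphism after inverting $U$ (both sides then compute $\HFinfComb(G, \s) = \Field[U, U^{-1}]$), this forces $H_*(A_i^L(\s)) = \Field[U]$.

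Once (1) is established, parts (2)--(5) follow by formally the same argument as in the proof of Theorem~\ref{thm:aibidi}. For (2) and (3) one uses that $\phi_{i+1}^L \circ \psi_i^L$ and $\psi_i^L \circ \phi_{i+1}^L$ are both multiplication by $U$, and that $v_i^L$ and $h_i^L$ become isomorphisms after inverting $U$; these constraints force the induced maps between copies of $\Field[U]$ to be multiplications by $U^{d_i^L(\s)}$ with $d_i^L(\s) \in \{0,1\}$ and by $U^{a_i^L(\s)}$, $U^{b_i^L(\s)}$, respectively. Part (4) is read off from the commutative triangle with vertices $H_*(A_i^L(\s)), H_*(A_{i+1}^L(\s)), H_*(B^L(\s))$ and the commutative square with $H_*(A_i^L(\s)), H_*(A_{i+1}^L(\s)), H_*(C_i^L(\s)), H_*(C_{i+1}^L(\s))$ together with the $U$-multiplication map $C_{i+1}^L(\s) \to C_i^L(\s)$ inducing an isomorphism on homology. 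Finally, (5) follows from the finite generation of $\HFmComb(G_{-k}(v_0), \t)$ as an $\Field[U]$-module: if either monotone sequence $\{a_i^L(\s)\}$ or $\{b_{-j}^L(\s)\}$ stabilized on a nonzero value, Theorem~\ref{thm:latticemappingcone} would produce non-finitely-generated homology.

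The main obstacle is step (1) for $A_i^L(\s)$. The Heegaard Floer proof uses the assumption that $Y_p(K)$ is an $L$-space, whereas the lattice counterpart provides only almost rationality of $G_{-k}(v_0)$, a strictly weaker property. The delicate point is carefully tracking the $\delta$-grading across the mapping cone differential $f^L$ so that the concentration statement of Theorem~\ref{thm:nemethi} genuinely forces torsion-freeness of $H_*(A_i^L(\s))$, rather than merely concentrating any hypothetical torsion in a specific bidegree of the mapping cone.
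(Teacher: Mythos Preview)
Your proposal is correct and follows essentially the same route as the paper's own proof. The paper argues exactly as you do: $H_*(B^L(\s))$ and $H_*(C_i^L(\s))$ via rationality of $G$ and the chain isomorphism $N$; torsion-freeness of $H_*(A_i^L(\s))$ via almost rationality of $G_{-k}(v_0)$, concentration of $\HFmComb(G_{-k}(v_0))$ in $\delta$-grading zero, and the observation that any class in $\ker v_*^L\cap\ker h_*^L$ would produce a nontrivial class of $\HFmComb_{>0}$; rank one via inverting $U$; and then parts (2)--(5) verbatim from Theorem~\ref{thm:aibidi}. Your closing worry about tracking the $\delta$-grading is unfounded: the final clause of Theorem~\ref{thm:latticemappingcone}, which you already invoked, is precisely the statement that $\HFmComb_0$ is supported in $\mathbb{B}^L_{\t}$, so any class coming from $\mathbb{A}^L_{\t}$ that survives to the mapping cone homology automatically sits in strictly positive $\delta$-grading---there is nothing further to check.
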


\begin{proof}
  By definition $B^L(\s )$ is chain homotopic to $\CFmComb (G, \s )$,
  hence by our assumption on $G$ it follows from
  Theorem~\ref{thm:nemethi} that $H_* (B^L(\s ))=\Field [U]$. The map
  $N$ encountered in Theorem~\ref{thm:latticemappingcone} (multiplied
  with the appropriate $U$-power) provides a chain isomorphism between
  $C_i^L(\s )$ and $B^L(\s _{v_0})$, hence the assumption on $G$ also
  shows that $H_* (C_i^L (\s ))=\Field [U]$.

  The argument for $A_i^L(\s )$ is slightly more complicated.  Any
  $U$-torsion of $H_*(A_i ^L(\s ))$ must be in the kernel of $v_*$ and
  $h_*$, since their images are in free $\Field [U]$-modules. From
  these maps the homology $\HFmComb (G_{-k}(v_0))$ can be computed for
  any $k$. By Theorem~\ref{thm:nemethi} and Lemma~\ref{lem:arat} these
  homologies are concentrated on level 0, while the kernel of $v_*^L$
  and $h_*^L$ determine nontrivial subgroups of $\HFmComb _{>0}$. From
  this observation we conclude that $A_i^L(\s )$ is a free
  $\Field[U]$-module.  The rank over $\Field [U]$ can be determined
  using the Universal Coefficient Formula, and the fact that $v_*^L$
  provides isomorphisms between $A_i^L(\s )\otimes _{\Field [U]}\Field
  [\uuinv]$ and $B^L(\s )\otimes _{\Field [U]}\Field [\uuinv]$ (and
  $\HFinfComb (G, \s )=\Field [\uuinv]$ for every negative definite
  tree $G$). Therefore $H_*(A_i(\s ))$ is a free $\Field [U]$-module
  with $H_*(A_i (\s )\otimes _{\Field [U]}\Field [\uuinv ])=\Field
  [\uuinv ]$, implying that $H_* (A_i (\s ))=\Field [U]$, concluding
  the proof of (1).

  The nontriviality of the induced maps with $\Field [\uuinv
  ]$-coefficients show that the $U$-equivariant maps $v_*^L$ and
  $h_*^L$ are both nontrivial, and hence are multiplications by some
  $U$-power $U^{a_i^L(\s )}$ and $U^{b_i^L(\s )}$. The rest of the
  proof of the theorem now proceeds exactly as the proof of
  Theorem~\ref{thm:aibidi}.
\end{proof}

Since knot lattice homology and lattice homology are connected by the same
mapping cone description which connects knot Floer homology and Heegaard Floer
homology, the proof of 
Lemma~\ref{lem:minimumok} applies verbatim in the present 
context, giving:

\begin{lem}\label{lem:minimumoklattice}
  The quantity $\min \{ a_i^L (\s ) , b_i^L (\s ) \}$ can be
  determined from the lattice homology of $G_{-k}(v_0)$ for $k \in \N
  $ large enough and from a suitably chosen spin$^c$ structure $\t $
  on it. \qed
\end{lem}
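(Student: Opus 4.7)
The plan is to mimic the proof of Lemma~\ref{lem:minimumok} in the lattice setting, replacing surgery on $K\subset Y$ by attaching the framing $-k$ to the distinguished vertex $v_0$ and using the mapping-cone description of Theorem~\ref{thm:latticemappingcone} in place of Theorem~\ref{thm:oszint}. By Lemma~\ref{lem:arat}, for $k\in\bfn$ sufficiently large the graph $G_{-k}(v_0)$ is almost rational, so Theorem~\ref{thm:nemethi} and Theorem~\ref{thm:latticemappingcone} both apply; since $G$ and $G_{-k}(v_0)$ are negative definite, the entire argument stays within the category of $\F[U]$-modules and no completion is needed.

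First, for a fixed $\s\in\Spinc(Y_G)$ and a chosen index $i\equiv i_\s\pmod 1$, I would select $k$ and the spin$^c$ structure $\t$ on $G_{-k}(v_0)$ so that the pair $(v^L_i(\s),h^L_i(\s))$ appears as one specific pair of edges in the mapping cone of Theorem~\ref{thm:latticemappingcone}, while every other pair $\bigl(v^L_{i+\alpha\cdot n}(\s_{nv_0}),h^L_{i+\alpha\cdot n}(\s_{nv_0})\bigr)$ has the property that at least one of the exponents $a^L_{i+\alpha\cdot n}(\s_{nv_0})$ or $b^L_{i+\alpha\cdot n}(\s_{nv_0})$ vanishes. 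This is possible by part~(5) of Theorem~\ref{thm:aibidiL}, which guarantees that both exponent sequences stabilize at $0$ for $|n|$ large. Each edge along which the induced map on $H_*=\F[U]$ is an isomorphism can be contracted acyclically, and iterating these contractions cancels all summands of the cone except the one central piece, leaving a two-step complex quasi-isomorphic to
\[
g\colon \F[U]\longrightarrow \F[U]\oplus \F[U],\qquad g(1)=\bigl(U^{a^L_i(\s)},U^{b^L_i(\s)}\bigr),
\]
where the second component encodes $h^L_i(\s)_*$ composed with the chain isomorphism $N^L$ of Theorem~\ref{thm:latticemappingcone}.

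A direct calculation shows that the homology of $g$ is $\F[U]\oplus \F[U]/(U^m)$ with $m=\min\{a^L_i(\s),b^L_i(\s)\}$. Since this reduced complex has the same homology as the full mapping cone, and the latter computes $\HFmComb(G_{-k}(v_0),\t)$, the $\F[U]$-module structure of $\HFmComb(G_{-k}(v_0),\t)$ recovers $m$ as the order of its unique $U$-torsion summand. The main obstacle is organizing the iterative contraction so that only the desired central term survives and keeping track of the spin$^c$ structures; both are handled by the stabilization property in Theorem~\ref{thm:aibidiL}(5), while the remainder is the same routine $\F[U]$-linear algebra as in the proof of Lemma~\ref{lem:minimumok}.
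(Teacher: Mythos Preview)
Your proposal is correct and is essentially the same argument the paper gives: the paper simply states that the proof of Lemma~\ref{lem:minimumok} applies verbatim once one replaces Theorem~\ref{thm:oszint} by Theorem~\ref{thm:latticemappingcone}, and your write-up spells out exactly that translation (including the observation that no completion is needed). Your invocation of Theorem~\ref{thm:aibidiL}(5) to justify contracting all but the central pair of edges is the intended reason why the non-central exponents vanish for $k$ large.
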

 
Similar considerations as in the Heegaard Floer context allow us to get
information about the difference $a_i(\s )-b_i(\s )$. As before, in
these arguments we will rely on Maslov gradings.  Since no maps
induced by cobordisms have been introduced for lattice homologies (but
such maps play an important role in the proof of
Theorem~\ref{thm:kulonbseg}), we give a slightly modified argument.
Below we spell out the details of the
lattice homological counterpart of Theorem~\ref{thm:kulonbseg}.

Recall that the lattice chain complex $\CFmComb (Y, \s )$ admits a
Maslov grading. In the mapping cone construction of
Theorem~\ref{thm:latticemappingcone} the chain complexes $A_i (\s
_{nv_0})$ and $B(\s _{nv_0})$ get Maslov gradings from the mapping
cone. For these gradings the maps $v,h$ (as boundary maps) are of
degree $-1$.  On the other hand, $B(\s _{nv_0})$ is isomorphic as a
chain complex to $\CFmComb (G, \s _{nv_0})$, where the isomorphism is
given in \cite[Proposition~5.5]{latticeknot}. This isomorphism sends a
generator $[L,H]$ in $\CFmComb (G_{-k}(v_0))$ (with the property that
$L\in H^2 (X_G; \Z )$ characteristic and $H\subset \Vertices
(G_{-k}(v_0))=\Vertices (G)\cup \{ v_0\}$ satisfying $v_0\not\in H$) to
$[L\vert _G, H]$. Suppose now that $g$ is a generator of $H_*(A_i(\s
_{nv_0}))=\Field [U]$, and it is of Maslov grading $\mu (g)$.

Since $G$ is a rational graph, we get that the generator of $\HFmComb
(G, \s )$ can be represented by sum of pairs of the form $[K,\emptyset ]$, since
lattice homology in this case is supported in $\delta$-grading
0. According to Equation~\eqref{eq:maslov}, the grading of such an
element is ${\rm {gr}}([K,\emptyset ])=\frac{1}{4}(K^2+\vert
V_G\vert)$.  On the other hand, the generator of $\HFmComb (G, \s )$
is, by definition, of Maslov grading $d^L(G, \s )$.

According to \cite[Proposition~5.5]{latticeknot} the image of
$[K,\emptyset ]$ in the mapping cone is the class represented by
$[L,\emptyset ]$, where $L\vert _G=K$ and $L$ satisfies
\begin{equation}\label{eq:filtje}
\frac{1}{2}(L(\Sigma ) + \Sigma ^2)=i.
\end{equation}
Here $\Vertices (G_{-k}(v_0))=\{ v_0, v_1, \ldots ,v_r\}$ and $\Sigma =
v_0 +\sum _{j>0}a_j \cdot v_j$ with the property that $\Sigma \cdot
v_j =0$ for all $j>0$ and $a_j\in \bfq$. The
expression of \eqref{eq:filtje} uniquely defines $L$ by the
formula
\[
L(v_0)=2i-\Sigma ^2-K(\Sigma -v_0).
\]
Now the Maslov grading of $[L,\emptyset ]$ in the mapping cone is equal
to $\frac{1}{4}(L^2+\vert V_{G_{-k}(v_0)}\vert )$, which is obvioulsy
equal to
\[
\frac{1}{4}(L^2-K^2)+\frac{1}{4}+d^L(G, \s ).
\]
Therefore the Maslov grading $\mu (g)$ for the generator of $H_*
(A_i(\s ))$ is 
\[
\frac{1}{4}(L^2-K^2)+\frac{1}{4}+d^L(G, \s )-2a^L_i(\s ) +1.
\]
A similar argument gives that the same Maslov grading $\mu (g)$ is equal
to 
\[
\frac{1}{4}(L'^2-K'^2)+\frac{1}{4}+d^L(G, \s _{v_0})-2b^L_i(\s )+1,
\]
where $[K', \emptyset ]$ now represents a generator of $\HFmComb (G,
\s _{v_0})$ and $L'$ is its extension.
The difference of these two expressions now provides a formula for
$a_i^L (\s )-b_i^L(\s )$ in terms of $d^L$-invariants of lattice homology groups of 
the background $G$, and squares of certain characteristic cohomology elements:

\begin{thm}\label{thm:kulonbseglattice}
With the notations as above, the difference $a_i^L (\s ) -b_i^L(\s )$
is equal to
\[
\frac{1}{2}(d^L (G, \s )-d^L (G, \s _{v_0}))+\frac{1}{8}( (L^2-K^2) -(L'^2-K'^2)). \qed
\]
\end{thm}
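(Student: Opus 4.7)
The plan is to mimic the strategy used in the Heegaard Floer proof of Theorem~\ref{thm:kulonbseg}, but replacing cobordism-induced degree shifts (which are not available in lattice homology) by an explicit computation of Maslov gradings using the combinatorial formula \eqref{eq:maslov}. Concretely, I will compute the Maslov grading $\mu(g)$ of a generator $g \in H_*(A_i^L(\s)) \cong \Field[U]$ in two different ways, one via the map $v_i^L(\s)_* \colon H_*(A_i^L(\s)) \to H_*(B^L(\s))$ and one via $h_i^L(\s)_* \colon H_*(A_i^L(\s)) \to H_*(C_i^L(\s))$; subtracting the two expressions then isolates $a_i^L(\s) - b_i^L(\s)$.

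The first step is to set up the mapping cone from Theorem~\ref{thm:latticemappingcone} and recall that the maps $v^L$, $h^L$ appear as components of the boundary map, so they drop the induced Maslov grading by exactly one. Using Theorem~\ref{thm:aibidiL}, the map $v_i^L(\s)_*$ is multiplication by $U^{a_i^L(\s)}$ into $H_*(B^L(\s)) \cong \Field[U]$, so if the generator of $H_*(B^L(\s))$ sits in Maslov grading $\mu_B$ then $\mu(g) = -2a_i^L(\s) + \mu_B - 1$; likewise $\mu(g) = -2b_i^L(\s) + \mu_C - 1$ where $\mu_C$ is the Maslov grading of the generator of $H_*(C_i^L(\s))$.

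The next step is the identification of these background gradings $\mu_B$ and $\mu_C$ in terms of $d$-invariants and intersection data of $G$. Here I use the isomorphism of \cite[Proposition~5.5]{latticeknot} between $B^L(\s)$ (as a subcomplex of the mapping-cone chain complex for $G_{-k}(v_0)$) and $\CFmComb(G, \s)$, together with the analogous identification of $C_i^L(\s)$ with $B^L(\s_{v_0})$. Because $G$ is rational, Theorem~\ref{thm:nemethi} concentrates $\HFmComb(G, \s)$ in $\delta$-grading $0$, so the generators can be represented by classes of the form $[K, \emptyset]$, for which formula \eqref{eq:maslov} gives $\gr([K, \emptyset]) = \tfrac{1}{4}(K^2 + |V_G|)$. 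The extension $L$ on $G_{-k}(v_0)$ is then determined by the filtration level condition \eqref{eq:filtje}, forcing $L(v_0) = 2i - \Sigma^2 - K(\Sigma - v_0)$, and the same formula applied to $[L,\emptyset]$ on the larger graph produces $\gr([L,\emptyset]) = \tfrac{1}{4}(L^2 + |V_{G_{-k}(v_0)}|)$. Comparing these two computations gives $\mu_B = \tfrac{1}{4}(L^2 - K^2) + \tfrac{1}{4} + d^L(G, \s)$, and a parallel calculation with $K'$, $L'$ in the twisted $\spinc$ structure $\s_{v_0}$ yields $\mu_C = \tfrac{1}{4}(L'^2 - K'^2) + \tfrac{1}{4} + d^L(G, \s_{v_0})$.

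Finally, equating the two expressions for $\mu(g)$ and solving for $a_i^L(\s) - b_i^L(\s)$ yields the claimed identity after dividing by $2$ and regrouping. The main technical hurdle I expect is bookkeeping: carefully distinguishing the Maslov grading that $B^L(\s)$ inherits as a subcomplex of the mapping cone from its intrinsic grading as $\CFmComb(G,\s)$, and verifying that Proposition~5.5 of \cite{latticeknot} implements this identification in a grading-preserving manner (so that no additional constants appear). Once this is checked, the rest amounts to an elementary algebraic manipulation of the formula \eqref{eq:maslov} evaluated at the two pairs $(K, L)$ and $(K', L')$.
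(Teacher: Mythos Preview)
Your proposal is correct and follows essentially the same argument as the paper: both compute the Maslov grading $\mu(g)$ of a generator of $H_*(A_i^L(\s))$ in two ways via the maps $v^L$ and $h^L$, identify the target gradings using \cite[Proposition~5.5]{latticeknot} together with formula~\eqref{eq:maslov} applied to $[K,\emptyset]$ and its extension $[L,\emptyset]$, and then subtract. (One small slip: since $v^L$ drops Maslov grading by $1$, the relation reads $\mu(g)-1=\mu_B-2a_i^L(\s)$, so $\mu(g)=-2a_i^L(\s)+\mu_B+1$ rather than $-1$; this constant cancels in the subtraction and does not affect the result.)
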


\subsection{The connected sum formula}
\label{ss:consummlattice}
Suppose that $\Gamma _{v_0}$ and $\Gamma '_{v_0'}$ are two given
graphs with distingished vertices $v_0$ and $v_0'$, respectively. By
adding an edge connecting $v_0$ and $v_0'$ to the disjoint union
$\Gamma _{v_0}\cup \Gamma '_{v_o'}$ and then contracting this edge we
get a new graph $\Delta _{v_0=v_0'}$ with the distinguished vertex
$v_0=v_0'$.  The background 3-manifolds $Y_G$ and $Y_{G'}$
corresponding to $\Gamma _{v_0}$ and $\Gamma '_{v_0'}$ determine the
3-manifold $Y$ corresponding to $D=\Delta _{v_0=v_0'} -(v_0=v_0')$ by
$Y_D=Y_G\# Y_{G'}$. Moreover, the knot $K_{v_0=v_0'}$ in $Y_D$
corresponding to the distingushed vertex is isotopic to the connected
sum $K_{v_0}\# K_{v_0'}\subset Y_G\# Y_{G'}=Y_D$.

It was shown in \cite[Section~4]{latticeknot}
that the lattice homology chain complex
$\CFmComb (D, \s\# \s ')$ is the tensor product of $\CFmComb (G, \s )$
and of $\CFmComb (G', \s ')$ over $\Field [U]$, and the Alexander
gradings $A$ and $A'$ add up to produce the Alexander grading induced
by $v_0=v_0'$ on $\CFmComb (D, \s\# \s ')$.  Similarly to the Heegaard
Floer theory situation, the same simple derivation provides the doubly
filtered chain complexes $\CFinfComb (D, \s \# \s ')$.

\section{Comparing the two theories}
\label{sec:compare}

Suppose now that $\Gamma _{v_0}$ is a tree with a distinguished vertex
$v_0$ such that $G=\Gamma _{v_0}-v_0$ is a negative definite connected
rational graph (in particular, $v_0$ is a leaf).  The corresponding
3-manifold $Y_G$ is by Theorem~\ref{thm:nemethi}(1) an $L$-space with
a knot $K=K_{v_0}$ in it. Since by Lemma~\ref{lem:possurg} large
enough surgery on the knot $K$ is also an $L$-space, both
Theorems~\ref{thm:aibidi} and \ref{thm:aibidiL} apply to this
situation. In particular, for a fixed spin$^c$ structure $\s$ on $Y_G$
(or $G$) we get the sequences $\{ a_i (\s )\}, \{ a_i ^L(\s )\}, \{
b_i (\s )\}, \{ b_i ^L(\s )\}$, and $\{ d_i(\s )\}$, $\{ d_i ^L (\s
)\}$.

\begin{prop}\label{prop:dkegyenlok}
  Suppose that $\Gamma _{v_0}$ is a tree with a distinguished vertex
  $v_0$ such that $G=\Gamma _{v_0}-v_0$ is a negative definite
  connected rational graph (in particular, $v_0$ is a leaf). Then, for
  the sequences $\{ a_i (\s )\} =\{ a_i ^L(\s )\}, \{ b_i (\s )\} =\{
  b_i ^L(\s )\}$ hold, and therefore $d_i(\s )=d_i ^L(\s)$ for every $i\in
  i_{\s}+\Z$.
\end{prop}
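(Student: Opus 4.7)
The plan is to match the two pairs of sequences by showing separately that (i) $\min\{a_i(\s),b_i(\s)\}=\min\{a_i^L(\s),b_i^L(\s)\}$ and (ii) $a_i(\s)-b_i(\s)=a_i^L(\s)-b_i^L(\s)$. From these two identities the equalities $a_i(\s)=a_i^L(\s)$ and $b_i(\s)=b_i^L(\s)$ follow immediately, and then Theorems~\ref{thm:aibidi}(4) and \ref{thm:aibidiL}(4) force $d_i(\s)=d_i^L(\s)$ for every $i\in i_{\s}+\Z$.

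First I would compare the minima. By Lemma~\ref{lem:arat}, for $k\in\N$ sufficiently large the graph $G_{-k}(v_0)$ is almost rational, so Theorem~\ref{thm:nemethi} yields an isomorphism of Maslov-graded $\Field[U]$-modules
\[
\HFmComb(G_{-k}(v_0),\t)\cong \HFm(Y_{G_{-k}(v_0)},\t)
\]
for every spin$^c$ structure $\t$. By Lemma~\ref{lem:possurg} the manifold $Y_{G_p(v_0)}$ is an $L$-space for $p$ large, so the hypotheses of Theorem~\ref{thm:aibidi} are satisfied; in particular Lemma~\ref{lem:minimumok} applies, and together with its lattice counterpart Lemma~\ref{lem:minimumoklattice} and the isomorphism above one reads off $\min\{a_i(\s),b_i(\s)\}=\min\{a_i^L(\s),b_i^L(\s)\}$ from the common $\Field[U]$-module structure (using Lemma~\ref{lem:CompletionLemma} to pass between the completed and uncompleted theories in the Heegaard Floer case).

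Next I would compare the differences using Theorems~\ref{thm:kulonbseg} and \ref{thm:kulonbseglattice}. Since $G$ is rational, Theorem~\ref{thm:nemethi} gives $d^L(G,\s)=d(Y_G,\s)$ and $d^L(G,\s_{v_0})=d(Y_G,\s_{[K]})$, so the $d$-invariant contributions in the two formulas agree. It remains to identify the cohomological correction terms: the quantity $\tfrac{1}{8}(c_1^2(\u)-c_1^2(\u'))$ computed on the $2$-handle cobordism $W_p(K)$ must be matched with $\tfrac{1}{8}\bigl((L^2-K^2)-(L'^2-K'^2)\bigr)$ computed on the plumbing $X_{G_{-k}(v_0)}$. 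Both quantities are determined by the same data, namely the spin$^c$ structure $\s\in\Spinc(Y_G)$ (represented by a characteristic cohomology class $K$ on $X_G$), the spin$^c$ structure on the surgered manifold (specifying the extension $L$ of $K$ via the evaluation $i$ on the generator of $H_2$, cf.\ Equation~\eqref{eq:filtje}), and the twist by $[v_0]=[K]$; a direct comparison of $c_1^2$ in $H^2(W_p(K);\bfq)$ with $L^2-K^2$ in $H^2(X_{G_{-k}(v_0)};\bfq)$ — obtained by viewing $X_{G_{-k}(v_0)}$ as the union of $X_G$ and the $2$-handle cobordism from $Y_G$ — shows they coincide.

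The main obstacle is precisely the last identification of cohomological terms: one needs to keep the book-keeping of spin$^c$ structures on cobordisms consistent with the labels $n[v_0]$ and $i+\alpha\cdot n$ used in both mapping cone descriptions (Theorems~\ref{thm:oszint} and \ref{thm:latticemappingcone}), and check that the evaluation conventions for $c_1$ on the generator of $H_2$ are the same on both sides. Once this matching of conventions is in place, the two explicit formulas coincide termwise, and combining (i) with (ii) concludes the proof.
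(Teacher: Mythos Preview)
Your proposal is correct and follows essentially the same route as the paper's own proof: establish equality of the minima $\min\{a_i,b_i\}$ via the isomorphism $\HFmComb(G_{-k}(v_0),\t)\cong\HFm(Y_{G_{-k}(v_0)},\t)$ for almost rational $G_{-k}(v_0)$ (Lemmas~\ref{lem:arat}, \ref{lem:minimumok}, \ref{lem:minimumoklattice}), and equality of the differences $a_i-b_i$ via Theorems~\ref{thm:kulonbseg} and \ref{thm:kulonbseglattice} together with the identification $c_1^2(\u)=L^2-K^2$ coming from the decomposition of $X_{G_{-k}(v_0)}$ as $X_G$ glued to the $2$-handle cobordism. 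The paper phrases the reduction slightly differently (noting it suffices to prove $a_i=a_i^L$, since $\{a_i\}$ determines $\{d_i\}$ and hence $\{b_i\}$), but the substantive steps are identical.
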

\begin{proof}
  By Theorems~\ref{thm:aibidi}(4) and \ref{thm:aibidiL}(4) the
  sequences $\{ a_i (\s)\}$ and $\{ a_i ^L(\s )\}$ determine the
  sequences $\{ d_i(\s )\}$, $\{ d_i ^L (\s)\}$ respectively (and
  these sequences then detemine $\{ b_i (\s )\}$ and $\{ b_i ^L (\s
  )\}$), and therefore it is sufficient to verify that $a_i(\s )=a_i
  ^L(\s )$ ($i\in i_{\s }+\Z$) to conclude the statement of the
  proposition.

For the graph $G_{-k}(v_0)$ with $k\in \N$ large enough the lattice
homology group $\HFmComb (G_{-k}(v_0), \t)$ is isomorphic to $\HFm
(Y_{G_{-k}(v_0)}, \t)$: the graph is almost rational by
Lemma~\ref{lem:arat}, hence the isomorphism of the two theories
follows from Theorem~\ref{thm:nemethi} of N\'emethi on almost rational
graphs.  The combination of Lemmas~\ref{lem:minimumok} and
\ref{lem:minimumoklattice}, together with the above isomorphism now
implies that
\begin{equation}\label{eq:minek}
\min \{ a_i (\s ), b_i (\s )\} = \min \{ a_i^L  (\s ), b_i^L(\s )\}.
\end{equation}

In a similar manner, Theorems~\ref{thm:kulonbseg} and
\ref{thm:kulonbseglattice} imply 
that 
\begin{equation}\label{eq:egyenloek}
a_i (\s )-b_i(\s )=a_i ^L (\s )-b_i ^L (\s ).
\end{equation}
Indeed, since $G$ is a rational graph, the lattice homology $\HFmComb
(G, \s )$ and the Heegaard Floer homology $\HFm (Y_G, \s )$ are
isomorphic as Maslov graded $\Field [U]$-modules, and in particular,
the $d$-invariants $d(Y _G, \s )$ and $d^L (G, \s )$ are equal for
every spin$^c$ structure $\s \in Spin ^c (Y_G)$.  Furthermore, the
difference $L^2-K^2$ appearing in Theorem~\ref{thm:kulonbseglattice}
can be identified with $c_1^2(\u )$ of Theorem~\ref{thm:kulonbseg},
since the restriction of $L$ to $G$ is equal to $K$, while on
$G_{-k}(v_0)-G$ the cohomology class $L$ is equal to $c_1(\u )$.
Therefore $c_1^2(\u )$ is equal to $L^2-K^2$, and the same reasoning
shows that $c_1^2(\u ') =L'^2-K'^2$.

These arguments then verify Equation~\eqref{eq:egyenloek}, implying
(with the use of Equation~\eqref{eq:minek}) that $a_i(\s )=a_i ^L (\s
)$, $b_i (\s )= b_i^L (\s )$, and hence $d_i (\s )=d_i ^L(\s )$ for
all $i \in i_{\s }+\Z$, concluding the proof of the proposition.
\end{proof}

\begin{rem}
  The assumption that $v_0$ is a leaf is used only through the fact
  that $G_{-k}(v_0)$ is almost rational (and therefore the proof of
  Theorem~\ref{thm:aibidiL} is applicable). Therefore the above
  identification of $a_i (s)$ with $a_i^L(\s )$ (and consequently $b_i
  (\s )=b_i ^L(\s )$ and $d_i (\s )=d_i ^L (\s )$) follows by the same
  argument provided the framings of $G=\Gamma _{v_0}-v_0$ imply that
  $G_{-k}(v_0)$ is almost rational for $k \in \N $ large enough.
\end{rem}

From this point, a purely algebraic argument about doubly filtered
chain complexes will imply that (at least in the special case when
$v_0$ is a leaf) the doubly filtered chain complexes in the two
theories are filtered chain homotopic. Although the algebraic
considerations here are closely related to the algebraic lemmas
from~\cite{OSzlens}, for the sake of completeness we prefer to give a
self-contained treatment here.

To state the result, we recall some notations from \cite{OSzlens}.
Suppose that $(C, j, A)$ is a doubly filtered chain complex over
$\Field [\uuinv ]$, and denote the associated bigraded complex by the
same triple. (In our application the grading $j$ will be an integer,
while $A$ will be an element of $i_C+\Z$ with some fixed rational
number $i_C\in [0,1)$.) Assume that $(C, j , A)$ has the property that
  $(j(Ux), A (Ux))=(j(x)-1, A(x)-1)$ for a bihomogeneous element $x\in
  C$. In our arguments only special types of chain complexes will
  appear.  The relevant properties are spelled out in the following
  definition.

\begin{defn} 
  \label{def:KnotFloerHomologyType}
  A doubly-filtered complex $(C,j,A)$ over $\Field[\uuinv ]$ with
  Malsov grading $M$ is of {\em knot Floer homology type} if:
  \begin{itemize}
 \item The differential drops the Maslov grading by $1$.
  \item Multiplication by $U$ drops the Maslov grading by $2$.
  \item Multiplication by $U$ drops both $j$ and $A$ by one.
  \item $C$ is filtered chain homotopy equivalent to a finitely
    generated, free chain complex over $\Field[\uuinv ]$.
  \end{itemize}
\end{defn}

View $C$ as an $\Field [U]$-module.  We specify subcomplexes of $C$ by
specifying domains in the plane: For $R\subset \R^2$ the symbol $C(R)$
denotes the subvector space of $C$ generated by those bihomogeneous
elements which have bigradings in $R$. If $R$ satisfies the property
that for $(x_1, y_1)\in R$ and $x_2\leq x_1, y_2\leq y_1$ we also have
$(x_2, y_2)\in R$, then $C(R)$ is a subcomplex of $C$.  The subcomplex
$C\{ (x,y)\in \R ^2 \mid x\leq 0, y\leq i\}$ is denoted by $A_i$.  The
subcomplex $C^-$ is defined to be $C(\{ (x,y)\mid x\leq 0\})$, and we
get the quotient complex ${\widehat {C}}$ of $C^-$ by taking $U=0$ in
the $\Field [U]$-module $C^-$. Since $H_*(C)$ is finitely generated
(as an $\Field [\uuinv ]$-module), it follows that $H_*({\widehat
  {C}})$ is a finite dimensional vector space over $\Field$.

\begin{defn}
  A complex of knot Floer homology type $(C,j,A)$ is said to be 
  of {\em $L$-space type} if
  the homologies of the subcomplexes 
  $$A_i=C(\{ x\in C \mid j(x)\leq 0, \ A(x)\leq i\})$$ are all
  isomorphic to $\Field [U]$.  For such a chain complex the sequences
  $\{ a_i \}, \{ b_i\}$ and $\{ d_i\}$ can be defined exactly as in
  Theorems~\ref{thm:aibidi} and \ref{thm:aibidiL}.
\end{defn}

If $(C,j,A)$ is of $L$-space type, then the embeddings $\psi_i\colon
A_i\to A_{i+1}$ and $\phi_{i+1}\colon A_{i+1}\to A_i$ (we get by
composing the multiplication by $U$ map with the obvious embedding)
induce maps on the homologies which are multiplications by $U^{d_i}$
and $U^{e_{i+1}}$ respectively. Since the composition $\phi_{i+1}\circ
\psi_i\colon A_i \to A_i$ is multiplication by $U$, so $d_i+e_{i+1}=1$
and hence from $d_i, e_{i+1}\geq 0$ if follows that $d_i, e_{i+1}\in \{ 0,1\}$.

Next we define a model complex of $L$-space type.  For that matter fix rational
numbers 
$\{q,\alpha_1,\dots,\alpha_n,\beta_1,\dots,\beta_{n+1}\}$ with
the following properties:
\begin{itemize}
  \item $\alpha_i\equiv \alpha_j\pmod{2\Z}$
    for all $i,j\in 1,\dots n$,
  \item $\beta_i\equiv \beta_j\pmod{2\Z}$
    for all $i,j\in 1,\dots n+1$,
  \item $\alpha_1\equiv \beta_1+1\pmod{2\Z}$, and 
  \item  $\beta_{i}>\alpha_{i}>\beta_{i+1}$ for $i=1,\dots,n$.
\end{itemize}
The model complex of $L$-space type
$C(q,\alpha_1,\dots,\alpha_n;\beta_1,\dots,\beta_{n+1})$ is defined as
follows. The generators (over $\Field[\uuinv ]$) of the complex are
$\{x_1,\dots,x_n\}$ and $\{y_1,\dots,y_{n+1}\}$. The Alexander and
Maslov values of these generators are defined as follows:
\begin{itemize}
  \item for $k=1,\dots,n$, $j(x_k)=0$,
  \item for $k=1,\dots,n$, $A(x_k)=\alpha_k$,
  \item for $k=1,\dots,n$ $M(x_k)=q-2 \sum_{\ell=1}^{k}
    (\beta_\ell-\alpha_\ell)+1;$
  \item for $k=1,\dots,n+1$, $j(y_k)=0$,
  \item for $k=1,\dots,n+1$, $A(y_k)=\beta_k$, and
  \item for $k=1,\dots,n+1$ $M(y_k)=q-2 \sum_{\ell=1}^{k-1}
    (\beta_\ell-\alpha_\ell).$ (In particular, $M(y_1)=q$.)
\end{itemize}
We equip $C(q,\alpha_1,\dots,\alpha_n;\beta_1,\dots,\beta_{n+1})$ with
the differential
\begin{align*}
\partial x_k & = U^{\beta_k-\alpha_k} y_k + y_{k+1} \\
\partial y_\ell & = 0
\end{align*}
for $k=1,\dots,n$ and $\ell=1,\dots,n+1$. It is easy to check that 
\begin{lem}
The complex $C(q,\alpha_1,\dots,\alpha_n;\beta_1,\dots,\beta_{n+1})$ is a chain
  complex of $L$-space type. \qed
\end{lem}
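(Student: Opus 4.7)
The plan is to verify in turn the two defining conditions of $L$-space type: that $(C,j,A)$ is of knot Floer homology type, and that $H_*(A_i)\cong \Field[U]$ for every allowed index $i$.

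For the first condition, finite generation and the behaviour of $U$ on the two filtrations are immediate from the definitions; the only point requiring work is that $\partial$ decreases $M$ by exactly $1$. This reduces to the elementary computation
\[
M(U^{\beta_k-\alpha_k}y_k)=M(y_k)-2(\beta_k-\alpha_k)=q-2\sum_{\ell=1}^{k}(\beta_\ell-\alpha_\ell)=M(y_{k+1}),
\]
together with $M(x_k)-1=q-2\sum_{\ell=1}^{k}(\beta_\ell-\alpha_\ell)$. The congruence conditions on the $\alpha_i,\beta_i$ collapse all $\alpha_k,\beta_k$ into a single coset $i_C+\Z\subset\bfq$, so the allowed indices $i$ form a single arithmetic progression.

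Next I would give an explicit description of $A_i$ as a free $\Field[U]$-module. With the threshold exponents $s_k^x(i)=\max\{0,\alpha_k-i\}$ and $s_k^y(i)=\max\{0,\beta_k-i\}$, the module $A_i$ is freely generated over $\Field[U]$ by the shifted generators $x_k'=U^{s_k^x(i)}x_k$ for $k=1,\dots,n$ and $y_k'=U^{s_k^y(i)}y_k$ for $k=1,\dots,n+1$. The inequalities $\beta_k>\alpha_k>\beta_{k+1}$ are crucial here: they ensure both that $A_i$ is closed under $\partial$ and that in the rescaled relation $\partial x_k'=U^{p_k}y_k'+U^{q_k}y_{k+1}'$ the exponents $p_k=s_k^x(i)+(\beta_k-\alpha_k)-s_k^y(i)$ and $q_k=s_k^x(i)-s_{k+1}^y(i)$ are both nonnegative. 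A short case check shows that $p_k=0$ precisely when $i\leq \alpha_k$ and $q_k=0$ precisely when $i\geq\alpha_k$, so at least one of the two exponents vanishes for every $k$.

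The computation of $H_*(A_i)\cong\Field[U]$ is then purely formal. Reading the coefficients of $y_{n+1}',y_n',\dots,y_1'$ in a putative boundary $\partial(\sum_k a_k x_k')$ in reverse order produces a triangular cascade forcing every $a_k=0$, so no cycle involves any $x_k'$; hence the cycle submodule is the free $\Field[U]$-module on $y_1',\dots,y_{n+1}'$. The boundary relations $U^{p_k}y_k'\equiv U^{q_k}y_{k+1}'$ (one exponent always zero) then express every $y_k'$, modulo boundaries, as a $U$-power multiple of a single distinguished generator $y_{k^*(i)}'$, where $k^*(i)$ is the pivot defined by $\alpha_{k^*(i)}\leq i<\alpha_{k^*(i)-1}$. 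The same triangular argument precludes $U$-torsion on $y_{k^*(i)}'$, giving $H_*(A_i)\cong\Field[U]$. The main obstacle is purely bookkeeping: tracking the pivot $k^*(i)$ and the exponents $p_k,q_k$ through the case analysis of where $i$ lies in the interlaced sequence $\beta_1>\alpha_1>\beta_2>\alpha_2>\dots>\beta_n>\alpha_n>\beta_{n+1}$; everything beyond this is formal linear algebra over $\Field[U]$.
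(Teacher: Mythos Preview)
Your argument is correct. The paper itself supplies no proof of this lemma: the statement is followed immediately by a \qed, indicating that the authors regard it as a routine check left to the reader. Your proposal fills in exactly that check, and each step holds up. The Maslov grading computation is right; the identification of $A_i$ as the free $\Field[U]$-module on the shifted generators $U^{s_k^x(i)}x_k$ and $U^{s_k^y(i)}y_k$ is correct; the case analysis on $p_k,q_k$ (with $p_k=0\Leftrightarrow i\le\alpha_k$ and $q_k=0\Leftrightarrow i\ge\alpha_k$) is accurate; and the triangular argument both kills the $x$-part of any cycle and shows that the pivot generator $y_{k^*(i)}'$ is torsion-free. The only cosmetic point is that your definition of the pivot $k^*(i)$ via $\alpha_{k^*}\le i<\alpha_{k^*-1}$ needs the obvious conventions $\alpha_0=+\infty$, $\alpha_{n+1}=-\infty$ to cover the extreme ranges $i\ge\alpha_1$ and $i<\alpha_n$; with those conventions the argument goes through uniformly.
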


\begin{defn}
  \label{def:Minimal}
  A complex of knot Floer homology type is called {\em minimal} if 
  the differential on its associated graded complex vanishes.
\end{defn}

\begin{lem}
  \label{lem:MinimalModel}
  Every chain complex of knot Floer homology type is filtered chain homotopy equivalent
  to a minimal one.
\end{lem}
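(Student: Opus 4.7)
The plan is to apply the standard cancellation (or ``Gaussian elimination'') procedure iteratively. First, using the last bullet of Definition~\ref{def:KnotFloerHomologyType}, I would replace $(C,j,A)$ by a filtered chain homotopy equivalent, finitely generated free chain complex over $\Field[\uuinv]$ equipped with a bihomogeneous $\Field[\uuinv]$-basis. If the associated graded differential is nonzero, then there must exist basis elements $x,y$ with $j(x)=j(y)$, $A(x)=A(y)$ and $M(x)=M(y)+1$ such that $\partial x = y + \eta$ for some remainder $\eta$; indeed, the coefficient of $y$ in $\partial x$ is an element of $\Field[\uuinv]$ of bigrading $(0,0)$, hence must equal $1\in\Field$.

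Next I would invoke the cancellation lemma. Let $D\subset C$ be the acyclic subcomplex spanned over $\Field[\uuinv]$ by $x$ and $\partial x$. For each remaining basis element $z\neq x,y$, write $c(z)\in\Field$ for the coefficient of $y$ in $\partial z$ and set $z' := z - c(z)\cdot x$; the family $\{z'\}\cup\{x,\partial x\}$ is a new bihomogeneous basis of $C$, and $C':=\mathrm{span}_{\Field[\uuinv]}\{z'\}$ is a subcomplex with $C=C'\oplus D$ as chain complexes. The crucial observation is that whenever $c(z)\neq 0$, the element $y$ appears in $\partial z$, so the filtration-preserving property of $\partial$ forces $j(z)\geq j(y)=j(x)$ and $A(z)\geq A(y)=A(x)$; hence $z'$ has the same bifiltration level as $z$. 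It follows that both the change of basis and the induced differential on $C'$ respect both filtrations, and that $C'$ is again of knot Floer homology type, filtered chain homotopy equivalent to $C$, but with two fewer $\Field[\uuinv]$-generators.

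Finally I would iterate the above step. Since $C$ has only finitely many $\Field[\uuinv]$-generators, the process terminates after finitely many cancellations, producing a complex $C_{\min}$ on which no further cancellation is possible; equivalently, the differential on its associated graded complex vanishes, which is precisely the minimality condition of Definition~\ref{def:Minimal}. The principal bookkeeping issue, and what I would expect to be the main (though routine) technical obstacle, is the verification that each cancellation truly produces a \emph{filtered} chain homotopy equivalence; as indicated above, this reduces to the fact that $x$ and $y$ sit at a common bifiltration level, so that every basis change in the argument acts trivially on the filtrations $j$ and $A$.
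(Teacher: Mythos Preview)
Your proposal is correct and is essentially the approach the paper sketches. The paper passes in one step to the homology of the associated graded object (equipped with its induced differential) and simply asserts this is filtered chain homotopy equivalent to $C$; your iterated Gaussian elimination is the explicit, step-by-step implementation of that same reduction, with the added virtue that it makes transparent why each cancellation respects the bifiltration.
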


\begin{proof}
  We can assume without loss of generality that $C$ is finitely
  generated. Now, the homology of the associated graded object $C'$ is
  a module of free $\Field[\uuinv ]$-modules.  Let $C'$ be the chain
  complex gotten from $C$ by taking the homology of its associated
  graded object, equipped with its induced differential.  It is easy
  to see that $C'$ is chain homotopy equivalent to $C$.
\end{proof}
After these preparations, we are now ready to describe the proposition
which will be the key ingredient in identifying knot Floer homology
and knot lattice homology.

\begin{prop}\label{prop:algebra}
  Let $(C,j,A)$ be a filtered chain complex of $L$-space type.  Then,
  $(C,j,A)$ is homotopy equivalent to one of the model complexes of
  $L$-space type
 $C(q,\alpha_1,\dots,\alpha_n;\beta_1,\dots,\beta_{n+1})$. Indeed, the parameters
  which determine the model are uniquely specified by:
  \begin{itemize}
    \item $2n+1=\#\{i\big| d_i\neq d_{i-1}\}$
    \item Consider the sequence
      $\gamma_1,\dots,\gamma_{2n+1}$, arranged in
      decreasing order, for which
      $d_{\gamma_i}\neq d_{\gamma_i-1}$.  For $i=1,\dots,n+1$,
      define $\beta_i=\gamma_{2i-1}$, and for $i=1,\dots,n$, define
      $\alpha_i=\gamma_{2i}$.
    \item Define $q$ to be the Maslov grading of of the generator of
      $H_*(C(H))=H_*(C^-)\cong\Field[U]$.
  \end{itemize}
\end{prop}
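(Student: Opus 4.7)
The strategy is to replace $C$ by a minimal representative of its filtered chain homotopy class and then reconstruct the model from the invariants $(\{d_i\}, \{a_i\}, \{b_i\}, q)$. First, by Lemma~\ref{lem:MinimalModel}, I would assume $C$ is minimal, so every component of $\partial$ strictly drops the bifiltration $(j,A)$. The numerical data of $L$-space type is unchanged by such a replacement.

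Next, I would extract the combinatorial data $\gamma_1 > \cdots > \gamma_{2n+1}$ exactly as in the statement. Since $d_i \in \{0,1\}$ by Theorem~\ref{thm:aibidi}, and $d_i=1$ for $i\ll 0$ (because $b_{i-1}=b_i$ forces $d_{i-1}=1$ when $b_i=0$), while $d_i=0$ for $i\gg 0$ (because $a_i=a_{i+1}$ forces $d_i=0$ when $a_i=0$), the number of jumps $2n+1$ is automatically odd. Setting $\beta_i=\gamma_{2i-1}$ and $\alpha_i=\gamma_{2i}$ produces a decreasing sequence $\beta_1>\alpha_1>\cdots>\alpha_n>\beta_{n+1}$ satisfying the required congruences, because $a_{i}$ changes precisely on transitions $\beta_i\leftrightarrow \alpha_i$.

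The heart of the argument is to match the minimal complex with the model complex $C(q,\alpha_1,\dots,\alpha_n;\beta_1,\dots,\beta_{n+1})$. I would proceed by induction on $n$. For the base case $n=0$, the hypothesis $H_*(A_i)\cong\F[U]$ together with the single jump at $\beta_1$ forces the minimal model to have a unique generator at bigrading $(0,\beta_1)$, whose Maslov grading must be $q$ (since it represents the generator of $H_*(C^-)$). For the inductive step, I would isolate the generator of $H_*(C^-)\cong \F[U]$ in the minimal model at bigrading $(0,\beta_1)$ (this is $y_1$) and consider the next jump at $\alpha_1$. The requirement $H_*(A_{\alpha_1})\cong\F[U]$ combined with the jump $a_{\alpha_1-1}=a_{\alpha_1}+1$ forces the existence of a unique minimal generator $x_1$ at bigrading $(0,\alpha_1)$ whose differential has a $U^{\beta_1-\alpha_1}y_1$ component; indeed, this component is what kills the old tower generator of $H_*(A_{\alpha_1-1})$ to produce the new one at $A$-height $\alpha_1$. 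Minimality and $\partial^2=0$ together with the location of the next jump at $\beta_2$ then force a second term $y_2$ (at $(0,\beta_2)$) in $\partial x_1$. Quotienting out the subcomplex generated by $x_1$ and $y_1$ produces a smaller minimal complex of $L$-space type with data $(q',\alpha_2,\dots,\alpha_n;\beta_2,\dots,\beta_{n+1})$ to which the inductive hypothesis applies; the Maslov shifts in the statement follow by bookkeeping the change in grading from the differential $\partial x_1$.

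The main obstacle will be the inductive step, specifically ruling out any extra components of $\partial x_k$ beyond $U^{\beta_k-\alpha_k}y_k+y_{k+1}$. Any additional term would have strictly lower bifiltration than $x_k$ by minimality, so would involve $U^s y_\ell$ with $\ell>k+1$ or $s>0$ on some $y_\ell$; I would rule these out by examining $H_*(A_i)$ for $i$ between $\beta_{k+1}$ and $\beta_k$ and checking that any extra term would either create torsion in some $H_*(A_i)$ (violating $L$-space type) or could be removed by a filtered change of basis (preserving minimality). This is analogous to the algebraic rigidity lemmas of \cite{OSzlens}, and the same style of argument, tracking the unique $U$-tower generator in each $H_*(A_i)$ as $i$ traverses a jump, should suffice.
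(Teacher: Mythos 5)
Your overall architecture (pass to a minimal model via Lemma~\ref{lem:MinimalModel}, read off the jump set of $\{d_i\}$, place one generator at each jump, and then pin down the differential) matches the paper's, and your parity observation that the number of jumps is odd is correct. But the step you yourself flag as the main obstacle --- showing that $\partial x_k$ has \emph{only} the two terms $U^{\beta_k-\alpha_k}y_k+y_{k+1}$ --- is where your proposed mechanism diverges from the paper's and, as stated, does not work. An extra higher-order term such as $U^t y_m$ with $t>\beta_k-\alpha_k$ or $m<k$ does \emph{not} in general create $U$-torsion in any $H_*(A_i)$: the homologies of the $A_i$ only see the leading behaviour of the differential, so the ``$L$-space type'' hypothesis alone cannot exclude these terms. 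The change-of-basis branch of your disjunction is also not routine: absorbing $U^t y_m$ into another generator can raise that generator's $(j,A)$-filtration level (changing the associated graded object and breaking minimality), and even when it is legitimately filtered it reintroduces new lower-order terms into the other $\partial x_\ell$'s, so one would need a separate termination and consistency argument. The paper's actual mechanism is a Maslov-grading computation: once the leading terms of Equation~\eqref{eq:PartDifferential} are established from the $2\times 2$ mapping-cone homology, the Maslov gradings of all generators are forced, and one checks directly that any candidate extra term $U^t y_m$ has $M(U^t y_m)<M(x_k)-1$, so it cannot appear in a differential that drops Maslov grading by exactly one. This use of the third grading is the key idea missing from your write-up (you invoke Maslov gradings only for ``bookkeeping'').

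Two smaller gaps: you never argue that $\partial y_\ell=0$ in the minimal model (the paper deduces this from injectivity of $\partial$ on the odd-parity summand, which in turn follows from the leading term $y_{k+1}$ in $\partial x_k$); and in your induction, the span of $x_1$ and $y_1$ is not a subcomplex (since $\partial x_1$ contains $y_2$), so the quotient you propose needs to be set up more carefully, and you would still have to verify that the resulting complex is of $L$-space type with the truncated jump data. The paper avoids induction entirely by treating all $k$ simultaneously.
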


\begin{proof}
  By Lemma~\ref{lem:MinimalModel}, we can assume without loss of
  generality that $C$ is minimal. Our aim is to show that this minimal
  $C$ is equal to the model
  $C(q,\alpha_1,\dots,\alpha_n;\beta_1,\dots,\beta_{n+1})$.

  Let us concentrate on the chain complex $(C, j, A)$.  The group
  at bigrading $(0,i)$ can be determined by an iterated mapping cone;
  i.e. the following $2\times 2$-complex:
  \[
  \begin{CD}
    A_{i+1} @>{\phi_{i+1}}>> A_i \\
    @A{\psi_{i}}AA @AA{\psi_{i-1}}A \\
    A_i @>{\phi_{i}}>> A_{i-1} \\
  \end{CD}
  \]
 There are two natural projection maps from this iterated mapping cone  which we will
 consider. The first  one is the projection to the mapping cone
 $A_i\stackrel{\psi_i}{\longrightarrow} A_{i+1}$. This gives a model
 for the connecting differential from $C(\{j=0,A=i\})\to
 C(\{j=0,A<i\})$.  The other projection maps to $A_i\stackrel{\phi _i
 }{\longrightarrow} A_{i-1}$. This gives a model for the connecting
 differential from $C(\{j=0,A=i\})\to C(\{j<0,A=i\})$.  (Here, again,
 $\phi _i$ is defined as the composition of multiplication by $U$
 followed by the natural embedding.)

  We can take the homology of the $2\times 2$ complex to get a new
  associated graded object:
  \[
  \begin{CD}
    H_*(A_{i+1}) @>{(\phi_{i+1})_*}>> H_*(A_i) \\
    @A{(\psi_{i})_*}AA @AA{(\psi_{i-1})_*}A \\
    H_*(A_i) @>{(\phi_{i})_*}>> H_*(A_{i-1}). \\
  \end{CD}
  \]
 Using the fact that the composites $(\phi_{i+1})_*\circ (\psi_i)_* =
 U$ and $(\psi_{i-1})_*\circ (\phi_i)_*=U$, we can rewrite the above
 square as:
  \[
  \begin{CD}
    H_*(A_{i+1}) \cong \Field[U] @>{U^{1-d_i}}>> \Field[U]\cong H_*(A_i) \\
    @A{U^{d_i}}AA @AA{U^{d_{i-1}}}A \\
    H_*(A_i)\cong\Field[U] @>{U^{1-d_{i-1}}}>> \Field[U]\cong H_*(A_{i-1}). \\
  \end{CD}
  \]
  It is not hard to see that the homology of this square 
  is the trivial group if $d_{i-1}=d_{i}$ and is equal to $\Field $
  if $d_{i-1}\neq d_i$.  Indeed, those results imply that the same
  is true for the group $H_*(C(\{j=0,A=i\})$. 

  Since $d_i, d_{i-1}\in \{ 0, 1\}$, there are two subcases where the
  homology is non-trivial.  The first subcase is where $d_{i-1}=1$ and
  $d_i=0$, in which case the mapping cone looks like:
  \[
  \begin{CD}
    H_*(A_{i+1}) \cong \Field[U]@>{U}>> \Field[U]\cong H_*(A_i) \\
    @A{1}AA @AA{U}A \\
    H_*(A_i)\cong \Field[U] @>{1}>> \Field[U]\cong H_*(A_{i-1}). \\
  \end{CD}
  \]
  In this case, we immediately see that both
  $H_*(C\{j=0,A<i\})=H_*(C\{j<0,A=i\})=0$.  Moreover, we see that the
  homology of $C(\{j\leq 0,A=i\})$ is supported in $j=0$.  This means
  that, under the map in the long exact sequence of homologies induced
  by the short exact sequence 
  $$\begin{CD}    
    0@>>> C(\{j<0,A=i\})@>>>C(\{j\leq 0,A=i\})@>>> C(\{j=0,A=i\})@>>> 0,
  \end{CD}
  $$
  the group $H_*(C(\{j\leq 0,A=i\}))$ maps isomorphically onto
  $H_*(C(\{j=0,A=i\})$. 

  The second subcase is where $d_{i-1}=0$ and $d_i=1$, in which case
  the mapping cone looks like:
  \[
  \begin{CD}
    H_*(A_{i+1}) @>{1}>> H_*(A_i) \\ @A{U}AA @AA{1}A \\ H_*(A_i)
    @>{U}>> H_*(A_{i-1}), \\
  \end{CD}
  \]
  and in this case we have that $ H(C\{j=0,A<i\})=\Field$, and indeed,
  the connecting homomorphism associated to the short exact sequence 
  $$\begin{CD}    
    0@>>> C(\{j=0,A<i\})@>>> C(\{j=0,A\leq i\})@>>> C(\{j=0,A=i\})@>>> 0
  \end{CD}
  $$
 induces an isomorphism $H_* (C(\{j=0,A=i\}))\cong H_*(C(\{j=0,A<i\}))$, and 
 $H_*(C(\{j=0,A\leq i\}))=0$.

Notice first of all that for $i$ large enough the embedding $A_i \to
A_{i+1}$ induces isomorphism on homologies, hence for $i$ large enough
$d_i=0$.  Similarly, for small enough (negative, large in absolute
value) $i$ we get $e_{i+1}=0$ and so $d_i=1$. This shows that whenever
$d_i=0$ and $d_{i-1}=1$, the corresponding $i$ is equal to some $\beta
_k$, while for $i$ with $d_i=0$ and $d_{i-1}$ we get $\alpha _k$.  The
above homological computations then show that $U^{\beta_k-\alpha_k}
y_k$ and $y_{k+1}$ both appear with non-zero multiplicity in $\partial
x_k$, and indeed
  \begin{equation}
    \label{eq:PartDifferential}
    \partial x_k = y_{k+1} + U^{\beta_k -\alpha_k} y_k + L_k,
  \end{equation}
  where $L_k$ is of lower order in the following ways:
  \begin{itemize}
    \item any non-zero term $U^j y_m$ in $L_k$ with $A(U^t
      y^m)=A(x_k)$ has $j>\beta_k-\alpha_k$, and 
    \item any non-zero term $y_m$ (with $j=0$) in $L_k$ has
      $A(y_m)=\beta_m<\beta_{k+1}$.
 \end{itemize}

 Equation~\eqref{eq:PartDifferential} (and the convention that Maslov
 grading drops by $1$ under the differential and by $2$ under
 $U$-multiplication) suffices to identify the Maslov gradings of the
 generators of $C$ with the corresponding generators of the model.
 
 As an $\Field[U]$-module,  $C$ splits into two summands (which are switched
  by the differential): the part in Maslov grading $q\pmod{2}$ (more
  concretely, the part generated over $\Field[U]$ by the $y_k$'s), and
  the part in grading $1+q\pmod{2}$ (i.e., the part generated by the
  $x_k$'s).  We call the first part {\em the part in even parity}, and
  the second {\em the part in odd parity}.

  Equation~\eqref{eq:PartDifferential} immediately implies that
  $\partial$ is injective on the part with odd parity, since by
  Equation~\eqref{eq:PartDifferential} $\partial x_k = y_{k+1}$ plus
  terms with higher $U$-powers in them (i.e. with $j>0$). We claim
  that this property ensures that $\partial y_k=0$. Indeed, observe
  that $\partial y_k$ is a cycle in $C(\{j<0,A<A(y_k)\})$, with odd
  parity. Since the differential is injective on the part with odd
  parity, we conclude that $\partial y_k=0$.

  It remains to verify that $\partial x_k$ has exactly the two stated
  term, i.e. that $L_k=0$. This follows from a more careful look at
  the Maslov gradings. Specifically, according to the grading formulas, it is
  easy to see that if $k<\ell$, then $M(y_k)<M(y_\ell)$ and
  $M(U^{\beta_k} y_k)<M(U^{\beta_\ell} y_\ell)$. Now, consider a
  possible term $U^t y_m$ in $\partial x_k$. If $m>k+1$, then $M(U^t
  y_m)<M(y_m)<M(y_{k+1})=M(x_k)-1$, so the Maslov degree of $U^t y_m$
  is too small for it to appear in $\partial x_k$. If $m<k$, then
  observe that
  $$A(U^t y_m)=\beta_m -t \leq A(x_k)=\alpha_k,$$ which in turn implies
  \begin{align*}
    M(U^t y_m)&=M(U^{t-\beta_m+\alpha_k} U^{\beta_m-\alpha_k} y_m) \\
    & \leq 2(\alpha_k-\beta_m-t) 
    +M(U^{\beta_m-\alpha_k} y_m)\\
    &< M(U^{\beta_k-\alpha_k} y_k)=M(x_k)-1.
  \end{align*}
  Thus, the terms with $m<k$ and with Alexander grading $\leq A(x_k)$
  also have too small Maslov grading to appear in $\partial x_k$.
  This line of reasoning ensures that $\partial x_k$ consists of the
  two stated terms, i.e. $L_k=0$. This completes the identification of
  $(C,j,A)$ with the model complex
  $C(q,\alpha_1,\dots,\alpha_n;\beta_1,\dots,\beta_{n+1})$.
\end{proof}

The combination of Proposition~\ref{prop:dkegyenlok}
and~\ref{prop:algebra} then readily implies the following

\begin{cor}\label{cor:leaf}
  Suppose that $G=\Gamma _{v_0}-v_0$ is a rational tree. If $v_0\in
  \Gamma _{v_0}$ is a leaf then the doubly filtered chain complex
  $(\CFinf (Y_G, \s ),j, A)$ in Heegaard Floer homology and
  $(\CFinfComb (G, \s ), j, A)$ in lattice homology are filtered chain
  homotopic for any spin$^c$ structure $\s \in Spin ^c (Y_G)$. 
\end{cor}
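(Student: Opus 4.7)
The strategy is to realize both doubly filtered complexes as instances of the algebraic model produced by Proposition~\ref{prop:algebra}, and then verify that the parameters defining the two models coincide.

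First I would check that $(\CFinf(Y_G,\s), j, A)$ and $(\CFinfComb(G,\s), j, A)$ are both filtered chain complexes of $L$-space type in the sense of Definition~\ref{def:KnotFloerHomologyType} (together with the $L$-space condition on the subcomplexes $A_i$). Since $G$ is rational, Theorem~\ref{thm:nemethi} implies $Y_G$ is an $L$-space, and by Lemma~\ref{lem:possurg} a sufficiently large positive surgery on $K_{v_0}$ is again an $L$-space, so the hypotheses of Theorem~\ref{thm:aibidi} are met and $H_*(A_i(\s))\cong\Field[U]$ for all $i$. On the lattice side, Theorem~\ref{thm:aibidiL} applies directly, giving $H_*(A_i^L(\s))\cong\Field[U]$. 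The remaining axioms (finite generation of $\CFinf$ up to filtered chain homotopy, the grading shifts for $\partial$ and $U$, etc.) are standard and follow from the constructions recalled in Sections~\ref{sec:hf} and~\ref{sec:lattice}.

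Next I would invoke Proposition~\ref{prop:algebra} twice. It produces model complexes
\[
C\bigl(q,\alpha_1,\dots,\alpha_n;\beta_1,\dots,\beta_{n+1}\bigr)
\quad\text{and}\quad
C\bigl(q^L,\alpha^L_1,\dots,\alpha^L_{n^L};\beta^L_1,\dots,\beta^L_{n^L+1}\bigr)
\]
that are filtered chain homotopy equivalent to the Heegaard Floer and lattice complexes, respectively. By the explicit recipe in Proposition~\ref{prop:algebra}, the $\alpha_i,\beta_i$ are determined entirely by the positions at which the sequence $\{d_i(\s)\}$ jumps, and $q$ is the Maslov grading of the generator of $H_*(C^-)$. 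The same description applies on the lattice side with $\{d_i^L(\s)\}$ and the corresponding lattice Maslov grading.

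To match the parameters I would then appeal to Proposition~\ref{prop:dkegyenlok}, which gives $d_i(\s)=d_i^L(\s)$ for every $i\in i_{\s}+\Z$; consequently $n=n^L$ and $\alpha_i=\alpha_i^L$, $\beta_i=\beta_i^L$. The only remaining input is the equality $q=q^L$. On the Heegaard Floer side $q$ is the Maslov grading of the generator of $\HFm(Y_G,\s)$, namely $d(Y_G,\s)$; on the lattice side it is the Maslov grading of the generator of $\HFmComb(G,\s)$, namely $d^L(G,\s)$. Since $G$ is rational (hence in particular almost rational), Theorem~\ref{thm:nemethi}(3) gives an isomorphism of Maslov-graded $\Field[U]$-modules $\HFm(Y_G,\s)\cong\HFmComb(G,\s)$, so $d(Y_G,\s)=d^L(G,\s)$. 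Combining these identifications, the two model complexes coincide, so $(\CFinf(Y_G,\s),j,A)$ and $(\CFinfComb(G,\s),j,A)$ are filtered chain homotopic.

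The main obstacle is not an algebraic one but a bookkeeping one: ensuring that the absolute Maslov and Alexander gradings on the two sides are normalized compatibly (so that the identification of the $d_i$ sequences in Proposition~\ref{prop:dkegyenlok} genuinely gives the same $\alpha_i,\beta_i$, rather than values shifted by a common constant) and that $q$ really equals the $d$-invariant under these normalizations. Once the conventions are aligned (which is implicit in the setup leading to Proposition~\ref{prop:dkegyenlok} and in the rationality of $G$), the rest of the argument is a direct application of the uniqueness statement in Proposition~\ref{prop:algebra}.
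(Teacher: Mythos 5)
Your proposal is correct and follows essentially the same route as the paper: both arguments establish that the two complexes are of $L$-space type via Theorems~\ref{thm:aibidi} and~\ref{thm:aibidiL}, identify the jump data of the $\{d_i\}$ sequences via Proposition~\ref{prop:dkegyenlok}, and conclude by the uniqueness of the model complex in Proposition~\ref{prop:algebra}. Your explicit verification that $q=q^L$ via $d(Y_G,\s)=d^L(G,\s)$ (from Theorem~\ref{thm:nemethi} for rational graphs) is a detail the paper leaves implicit, and it is a worthwhile addition rather than a deviation.
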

\begin{proof}
By Theorems~\ref{thm:aibidi} and \ref{thm:aibidiL} both the knot Floer
complex of $(Y_G, K_{v_0})$ and the knot lattice complex of $\Gamma
_{v_0}$ are of $L$-space type. According to
Proposition~\ref{prop:dkegyenlok} the corresponding sequences $\{ d_i
(\s )\}$ and $\{ d_i ^L(\s )\}$ are equal, hence the model complexes
determined by these sequences are equal for the two cases. Since by
Proposition~\ref{prop:algebra} the complexes are filtered chain
homotopic to the models described by the statement of the proposition,
the statement of the corollary follows at once.
\end{proof}

\begin{proof}[Proof of Theorem~\ref{thm:main}]
  Corollary~\ref{cor:leaf} verifies the statement of the theorem in
  case $v_0$ is a leaf.  Consider now the case when $v_0$ is not a
  leaf. This means that $\Gamma _{v_0}$ is the connected sum of a
  number of trees/forest, all with distinguished vertices, and in
  which the distinguished vertices are leaves. For those graphs
  Corollary~\ref{cor:leaf} verifies the isomorphism between the two
  theories, and for $\Gamma _{v_0}$ then the connected sum formulae of
  Subsections~\ref{ss:consummhf} and \ref{ss:consummlattice} imply the
  statement of the theorem.
\end{proof} 

\begin{proof}[Proof of Theorem~\ref{thm:egycsucs}]
Consider the graph $\Gamma _w$ we get from $G$ by erasing the framing of
$w$. In order to show that $\HFmComb (G)$ and 
$\HFm (Y_G)$ are isomorphic, it is enough to 
show that 
\begin{itemize}
\item the doubly filtered chain complex in the knot lattice homology
  of the vertex $w$ in $\Gamma _w$, and the doubly filtered chain
  complex in the knot Floer homology of the knot $K=K_w$ in $Y_{\Gamma
    _w-w}$ are filtered chain homotopic, and
\item the maps $N_n$ and $N_n^L$ in the mapping cone constructions of
  the two theories are equal.
\end{itemize}
The first issue is exactly the content of Theorem~\ref{thm:main}.  The
second one follows from the fact that $N_n$ (and similarly $N_n^L$) is
a chain isomorphism between $C_i(\s )$ and $B(\s _{[K]})$ (and between
$C_i ^L(\s )$ and $B^L(\s _{v_0})$, resp.), and since the homologies
of all these chain complexes are isomorphic to $\Field [U]$, the maps
$N_n$ (and similarly $N_n^L$) are uniquely determined, hence are
necessarily equal. This observation then concludes the proof of the 
theorem.
\end{proof}

\end{document}